\def\NAT@def@citea{\def\@citea{\NAT@separator}}
\theoremstyle{plain}
\newtheorem{theorem}{Theorem}[section]
\newtheorem{lemma}[theorem]{Lemma}
\newtheorem{corollary}[theorem]{Corollary}
\theoremstyle{definition}
\newtheorem{definition}[theorem]{Definition}
\newtheorem{example}[theorem]{Example}
\theoremstyle{remark}
\begin{document}

\title{The Bott-Duffin drazin inverse and its application}

\author{
\name{Lu Zheng\textsuperscript{a}\thanks{CONTACT Xiangyu Zhang. Email: xiangyuz021@163.com}, Xiangyu Zhang\textsuperscript{b,$\ast$}, Kezheng Zuo\textsuperscript{a} and Jing Zhou\textsuperscript{a}}
\affil{\textsuperscript{a} Teaching Section of Mathematics, Hubei Engineering Institute, Huangshi, China;}
 \affil{\textsuperscript{b} Jiaozhou Vocational Education Center School, Qingdao, China}}
\maketitle

\begin{abstract}
  The paper introduce a new type of generalized inverse, called Bott-Duffin drazin inverse  (or,
  in short, BDD-inverse) of a complex square matrix, and give some of its
  properties, characterizations and representations. Furthermore,
  We discuss the problem of the minimum P-norm solution of the constraint matrix equation by using the Bott-Duffin drazin inverse, and give Cramer's rule for this minimum P-norm solution.
\end{abstract}

\begin{keywords}
  Bott-Duffin inverse;  Bott-Duffin drazin inverse; constraint matrix equation; Cramer's rule 
\end{keywords}

\section{Introduction}
The notation $\mathbb{C}^{m\times{n}}$ represents the set of $m\times{n}$ complex matrices.
$L\leq\mathbb{C}^{n}$ means $L$ is a subspace of $\mathbb{C}^{n}$.
$\textup{rank}{(A)}$ is the rank of $A\in\mathbb{C}^{m\times{n}}$.
The smallest nonnegative integer $k$ for which $\textup{rank}{({A}^{k})}=\textup{rank}{({A}^{k+1})}$ is called the index of $A\in\mathbb{C}^{n\times{n}}$ and is denoted by $\textup{Ind}{(A)}$.
Bott and Duffin {\cite{bott1953algebra}} introduced the constrained inverse of the square matrix in electric network theory, namely the Bott-Duffin inverse.
When $A\in\mathbb{C}^{n\times{n}}$, $L\leq\mathbb{C}^{n}$ and $AP_{L}+P_{L^{\perp}}$ is invertible,
the Bott-Duffin inverse of $A$ with respect to $L$, denoted by $A_{(L)}^{(-1)}$, is defined by $A_{(L)}^{(-1)}=P_{L}{(AP_{L}+P_{L^{\perp}})}^{-1}$.
After this, many authors conducted further research on the Bott-Duffin inverse ({see}\cite{ben1963generalized,chen1990g,wei2003condition,pian2005algebraic,hughes2014minimality,tempo2018emerging,wu2023new}).
\par  It is interesting to mention that when $AP_{L}+ P_{L^{\perp}}$ is singular, Chen {\cite{yonglin1990generalized}} naturally defines the generalized Bott-Duffin inverse of $A\in\mathbb{C}^{n\times{n}}$ (denoted by $A_{(L)}^{(\dagger)}$) and extends the Bott-Duffin inverse to the general case.
Inspired by the paper of Chen {\cite{yonglin1990generalized}}, it is worth considering that if $AP_{L}+P_{L^{\perp}}$ is drazin invertible, can a new generalized inverse be obtained?
Therefore, our main aim is to introduce and investigate a new generalized
inverse, namely the Bott-Duffin drazin inverse.
\begin{definition}\label{D1}
  Let $A\in\mathbb{C}^{n\times{n}}$, $L\leq\mathbb{C}^{n}$. If $\textup{Ind}(AP_L + P_{L^\perp})=k$, then 
  \begin{equation}\label{E1}
    A_{(L)}^{(D)}=P_{L}{(AP_{L}+P_{L^{\perp}})}^{D},
  \end{equation}
  is called the Bott-Duffin drazin inverse of $A$ with respect to $L$.
\end{definition}
\par Let us now recall notions of several generalized inverses and notations.
\par As usual, $\mathbb{C}_{r}^{m\times{n}}$ stands for the set of all ${m\times{n}}$
complex matrices of rank $r$ while $I_{n}$ represents the identity matrix in $\mathbb{C}^{n\times{n}}$.
For $A\in\mathbb{C}^{m\times{n}}$,  the symbols $A^{\ast}$, $A^{-1}$, $\mathcal{R}{(A)}$ and $\mathcal{N}{(A)}$
stand for the conjugate transpose, the inverse ${(m=n)}$, the range space and the null space of $A$.
\par  The matrix formed by replacing column $i$ of $A$ with $b$ is denoted by $A{(i\rightarrow b)}$, where $A\in\mathbb{C}^{m\times{n}}$
and $b\in\mathbb{C}^{m}$.
Moreover, $O$ will refer to  the null matrix. $L^{\perp}$ is the orthogonal complement subspace of $L$. The dimension of $L$ is denoted by $\dim(L)$.
For two subspaces $S$ and $T$ such that $S\oplus T=\mathbb{C}^{n}$,
$P_{S,T}$ stands for the oblique projector onto $S$ along $T$.
The orthogonal projector onto a subspace $S$ will be indicated by $P_{S}$.
A matrix $X\in\mathbb{C}^{n\times{m}}$ that satisfies the equality $XAX=X$ is called the $\{2\}$-inverse of $A$.
\par  The Moore-Penrose inverse of $A\in\mathbb{C}^{m\times{n}}$ is the unique matrix ${A^{\dagger}}\in\mathbb{C}^{n\times{m}}$
satisfying the following four equations (see {\cite{penrose1955generalized,ben2003generalized,wang2018generalized}}):
\[AA^{\dagger}A=A,~A^{\dagger}AA^{\dagger}=A^{\dagger},~{(AA^{\dagger})}^{\ast}=AA^{\dagger}~\text{and}~{(A^{\dagger}A)}^{\ast}=A^{\dagger}A.\]
\par We recall that the Drazin inverse of $A\in\mathbb{C}^{n\times{n}}$
is the unique matrix $A^{D}\in\mathbb{C}^{n\times{n}}$ satisfying
the following three equations (see {\cite{ben2003generalized}}):
\begin{equation}\label{dra}
A^{D}AA^{D}=A^{D},~AA^{D}=A^{D}A~\text{and}~A^{D}A^{k+1}=A^{k},
\end{equation}
where $k=\textup{Ind}{(A)}$. We use the symbol $A^{\pi}$ represents $I_{n}-AA^{D}$, where $A\in\mathbb{C}^{n\times{n}}$.
\par If $X\in\mathbb{C}^{n\times{m}}$ satisfies the following three equations:
\[XAX=X,~\mathcal{R}{(X)}=S~\text{and}~\mathcal{N}{(X)}=T,\]
where $S,T\leq\mathbb{C}^{n}$, then $X$ is unique and is defined by $A_{S,T}^{(2)}$ (see {\cite{ben2003generalized,wang2018generalized}}).
\vspace{0.5em}
\par The main contributions of this paper are summarized as follows:
\begin{enumerate}
\item We give the equivalent conditions of $\textup{Ind}{(AP_{L}+P_{L^{\perp}})}=k$ and some properties of the BDD-inverse.
In particular, it shows that BDD-inverse is an outer inverse with prescribed range and null space.
\item Some characterizations of the BDD-inverse are obtained by using range space, null space, matrix equation, projectors.
\item The BDD-inverse is characterized by the rank equation, and some representations of the BDD-inverse are given.
In addition, the representation of the BDD-inverse is given by restricting $P_{L}AP_{L}$ in a specific subspace.
\item We use the BDD-inverse to study the problem of the minimum P-norm solution of the constrained matrix equations. Furthermore, we give Cramer's rule for the minimum P-norm solution.
\end{enumerate}

\par The rest of this paper is organized as follows: In Section 2, several  auxiliary lemmas are given.
In Section 3, we derive some equivalence conditions of $\textup{Ind}{(AP_{L}+P_{L^{\perp}})}$, and several properties of BDD-inverse of square matrix are given.
Section 4 provides some characteristics of BDD-inverse of square matrix.
Section 5 offers some representations of BDD-inverse of square matrix.
Finally, Section 6 is devoted to the study of the minimum P-norm solution of the constraint equation, and gives the Cramer's rule for its solution.
\section{Premliminaries}
\noindent First, we introduce some basic lemmas.
\begin{lemma}\label{tri}\emph{\cite{zhang2023exact}}
Let $M=\begin{bmatrix}A & B \\O & E\end{bmatrix}$
and $N=\begin{bmatrix}E & O \\ B & A\end{bmatrix}\in\mathbb{C}^{n\times{n}}$,
where $A$ and $E$ are square matrices such that $r=\operatorname{ind}(A)$
and $s=\operatorname{ind}(E)$. Then
\[M^{D}=\begin{bmatrix}
  A^{D} & X \\
  O & E^{D}
\end{bmatrix}~\text{and}~
  N^{D}=\begin{bmatrix}
  E^{D} & O \\
  X & A^{D}
  \end{bmatrix},\]  
where
  \[X=\sum_{i=0}^{s-1} A^{(i+2) D} B E^{i}E^{\pi}+A^{\pi}\sum_{i=0}^{r-1} A^{i} B E^{(i+2) D}-A^{D} B E^{D}.\]
\end{lemma}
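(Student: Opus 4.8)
The plan is to exploit the uniqueness of the Drazin inverse: I would exhibit the candidate $Y=\begin{bmatrix} A^{D} & X \\ O & E^{D}\end{bmatrix}$ and verify that it satisfies the three defining relations of \eqref{dra}, whence $Y=M^{D}$. Rather than pin down the exact value $k=\operatorname{ind}(M)$, I would use the standard equivalent characterization that $Y=M^{D}$ if and only if $MY=YM$, $YMY=Y$, and $M-M^{2}Y$ is nilpotent (the last being equivalent to $M^{k+1}Y=M^{k}$). The diagonal blocks of every relevant product collapse immediately to the Drazin identities for $A$ and $E$ (such as $AA^{D}=A^{D}A$ and $A^{D}AA^{D}=A^{D}$), so all the content of the lemma sits in the off-diagonal $(1,2)$ block.

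The hard part will be the commutativity $MY=YM$, which on comparing $(1,2)$ blocks is equivalent to the single identity
\begin{equation*}
AX-XE=A^{D}B-BE^{D}.
\end{equation*}
I would prove this by splitting $X=X_{1}+X_{2}+X_{3}$ with $X_{1}=\sum_{i=0}^{s-1}(A^{D})^{i+2}BE^{i}E^{\pi}$, $X_{2}=A^{\pi}\sum_{i=0}^{r-1}A^{i}B(E^{D})^{i+2}$ and $X_{3}=-A^{D}BE^{D}$, and computing $AX_{j}-X_{j}E$ for each. Using the reductions $A(A^{D})^{i+2}=(A^{D})^{i+1}$ and $(E^{D})^{i+2}E=(E^{D})^{i+1}$, together with the fact that $A^{\pi},E^{\pi}$ commute with $A,E$, each difference telescopes; the surviving boundary terms are killed exactly by the index relations $A^{\pi}A^{r}=0$ and $E^{s}E^{\pi}=0$. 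Summing the three contributions, the mixed terms cancel in pairs and leave precisely $A^{D}B-BE^{D}$. This telescoping-plus-index-cutoff computation is the technical heart of the argument.

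Granting commutativity, the relation $YMY=Y$ reduces to checking, in the $(1,2)$ block, that $A^{D}AX+XEE^{D}+A^{D}BE^{D}=X$; here $A^{D}AX_{2}=0$ because $A^{D}AA^{\pi}=0$, and $X_{1}EE^{D}=0$ because $E^{\pi}EE^{D}=0$, while the remaining pieces reassemble $X$. For the third condition I would compute
\begin{equation*}
M-M^{2}Y=M(I-MY)=\begin{bmatrix} AA^{\pi} & \ast \\ O & EE^{\pi}\end{bmatrix},
\end{equation*}
whose diagonal blocks $AA^{\pi}$ and $EE^{\pi}$ are nilpotent of indices at most $r$ and $s$ (since $A^{\pi}A^{r}=0$ and $E^{\pi}E^{s}=0$). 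A block upper-triangular matrix with nilpotent diagonal blocks is nilpotent, so $M-M^{2}Y$ is nilpotent and the characterization applies, giving $Y=M^{D}$.

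Finally, the formula for $N^{D}$ requires no new computation. Writing $P$ for the permutation matrix that interchanges the two coordinate blocks, one checks $N=PMP^{\ast}$, and since the Drazin inverse is invariant under similarity, $N^{D}=PM^{D}P^{\ast}$, which rearranges the blocks of $M^{D}$ into exactly $\begin{bmatrix} E^{D} & O \\ X & A^{D}\end{bmatrix}$.
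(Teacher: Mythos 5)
Your verification is correct: the telescoping computation for $AX-XE=A^{D}B-BE^{D}$ checks out (the boundary terms are indeed killed by $A^{\pi}A^{r}=O$ and $E^{s}E^{\pi}=O$), the block computation for $YMY=Y$ is right, and the nilpotency of $M-M^{2}Y$ via the diagonal blocks $AA^{\pi}$ and $EE^{\pi}$ together with the standard characterization of the Drazin inverse closes the argument; the permutation-similarity step for $N^{D}$ is also fine. There is nothing in the paper to compare against, since Lemma~\ref{tri} is imported from \cite{zhang2023exact} without proof; your direct verification is the standard route for such block-triangular Drazin-inverse formulas and can stand as a self-contained proof.
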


\begin{lemma}\label{aco}\emph{\cite{bu2013some}}
  Let $M=\begin{bmatrix}
    A & O\\
    C & O
    \end{bmatrix}\in\mathbb{C}^{n \times n}, A~\text {is a square matrix, then}$
    \[M^{D}=\begin{bmatrix}
    A^{D} & O \\
    C{(A^{D})}^{2} & O
    \end{bmatrix}.\]
\end{lemma}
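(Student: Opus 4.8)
The plan is to verify directly that the matrix
\[
G=\begin{bmatrix} A^{D} & O \\ C(A^{D})^{2} & O \end{bmatrix}
\]
satisfies the three defining relations \eqref{dra} of the Drazin inverse of $M$, appealing only to the corresponding Drazin identities for the single block $A$. First I would record the powers of $M$: a routine induction on $j\ge 1$ gives
\[
M^{j}=\begin{bmatrix} A^{j} & O \\ CA^{\,j-1} & O \end{bmatrix},
\]
the base case $j=1$ being $M$ itself (with $A^{0}=I$) and the inductive step following from the single block multiplication $M^{j+1}=M^{j}M$.

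Next I would check the commuting relation $MG=GM$ and the outer relation $GMG=G$. Both reduce to the $(1,1)$ and $(2,1)$ block entries, and each entry collapses through the elementary Drazin identities $AA^{D}=A^{D}A$, $A^{D}AA^{D}=A^{D}$, and in particular $(A^{D})^{2}A=A^{D}$. For instance $MG=\begin{bmatrix}AA^{D}&O\\CA^{D}&O\end{bmatrix}$, and the same matrix arises for $GM$ once $C(A^{D})^{2}A=CA^{D}$ is used; likewise $G(MG)=G$ after simplifying $A^{D}AA^{D}=A^{D}$ and $C(A^{D})^{2}AA^{D}=C(A^{D})^{2}$.

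The main obstacle is the third relation $M^{k+1}G=M^{k}$, since the relevant exponent is $k=\operatorname{ind}(M)$, which need not equal $r:=\operatorname{ind}(A)$. Rather than compute $\operatorname{ind}(M)$ explicitly, I would invoke the standard fact that a matrix $X$ satisfying $XMX=X$ and $MX=XM$ must equal $M^{D}$ as soon as $M^{\,l+1}X=M^{\,l}$ holds for a single exponent $l\ge 1$. Using the power formula,
\[
M^{\,l+1}G=\begin{bmatrix} A^{\,l+1}A^{D} & O \\ CA^{\,l}A^{D} & O \end{bmatrix},
\]
so it suffices that $A^{\,l+1}A^{D}=A^{\,l}$ and $CA^{\,l}A^{D}=CA^{\,l-1}$. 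Both follow from the Drazin property $A^{\,j+1}A^{D}=A^{\,j}$, valid for every $j\ge r$, provided $l-1\ge r$; hence choosing $l=r+1$ closes the argument and identifies $G$ as $M^{D}$.

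Alternatively, the formula is a degenerate instance of Lemma~\ref{tri}: taking the lower block matrix $N$ with $E\mapsto A$, $B\mapsto C$ and diagonal block $A\mapsto O$, one has $O^{D}=O$, $O^{\pi}=I$ and $\operatorname{ind}(O)=1$, so the sum defining $X$ retains only its single surviving term $CA^{2D}=C(A^{D})^{2}$, which recovers the stated expression.
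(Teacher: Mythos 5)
Your argument is correct; note that the paper itself supplies no proof of this lemma at all --- it is quoted verbatim from the cited reference --- so there is nothing internal to compare against, and your self-contained verification is a genuine addition. The power formula $M^{j}=\left[\begin{smallmatrix} A^{j} & O \\ CA^{j-1} & O \end{smallmatrix}\right]$, the block computations for $MG=GM$ and $GMG=G$, and the reduction of $M^{l+1}G=M^{l}$ to $A^{l+1}A^{D}=A^{l}$ and $A^{l}A^{D}=A^{l-1}$ (hence $l=r+1$ with $r=\operatorname{ind}(A)$) are all accurate. The only step that leans on an external fact is your appeal to uniqueness of the Drazin inverse when the third equation holds for \emph{some} exponent $l$ rather than for $l=\operatorname{Ind}(M)$ as in (\ref{dra}); this is indeed standard (from $M^{l+1}X=M^{l}$ one gets $\operatorname{rank}(M^{l})=\operatorname{rank}(M^{l+1})$, so $l\geq\operatorname{Ind}(M)$, and the usual uniqueness argument applies), but since the paper states the defining equations only at the exact index, a one-line justification --- or the direct observation from your power formula that $\operatorname{rank}(M^{j})=\operatorname{rank}\bigl(\left[\begin{smallmatrix}A^{j}\\ CA^{j-1}\end{smallmatrix}\right]\bigr)$ stabilizes no later than $j=r+1$ --- would make the argument airtight. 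Your alternative derivation from Lemma~\ref{tri} (setting the lemma's diagonal block to $O$, so that $O^{D}=O$, $O^{\pi}=I$, $\operatorname{ind}(O)=1$, leaving only the term $C(A^{D})^{2}$) is also correct and is in fact the route most consistent with the paper's own toolkit, since the paper applies Lemma~\ref{tri} in exactly this fashion to compute $(AP_{L}+P_{L^{\perp}})^{D}$ in Lemma~\ref{ler}.
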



\begin{lemma}\label{pq0}\emph{\cite{hartwig2001some}}
Let $M,~N\in\mathbb{C}^{n\times{n}}$ and $MN=NM=O$. Then
\[{(M+N)}^{D}=M^{D}+N^{D}.\]
\end{lemma}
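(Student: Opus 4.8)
The plan is to verify directly that $X := M^{D}+N^{D}$ satisfies the three defining equations of the Drazin inverse of $M+N$, and then to invoke uniqueness. Recall that it suffices to exhibit some integer $l\geq 1$ for which $X(M+N)X=X$, $(M+N)X=X(M+N)$, and $(M+N)^{l+1}X=(M+N)^{l}$; the last relation need not be checked at the exact value $l=\operatorname{Ind}(M+N)$, because the uniqueness of the Drazin inverse will pin down $X$ regardless of the particular admissible $l$.

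The first and decisive step is to record the orthogonality relations between the two summands. From the axioms in \eqref{dra} one deduces $M^{D}=M(M^{D})^{2}=(M^{D})^{2}M$, and likewise $N^{D}=N(N^{D})^{2}=(N^{D})^{2}N$. Combining these identities with the hypothesis $MN=NM=O$ gives $M^{D}N=(M^{D})^{2}(MN)=O$ and $NM^{D}=(NM)(M^{D})^{2}=O$, and symmetrically $N^{D}M=MN^{D}=O$. Feeding these back in once more yields $M^{D}N^{D}=(M^{D}N)(N^{D})^{2}=O$ and $N^{D}M^{D}=O$. Thus every cross term among $\{M,N,M^{D},N^{D}\}$ vanishes.

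The second step is to observe that $(M+N)^{j}=M^{j}+N^{j}$ for all $j\geq 1$, which follows at once by induction from $MN=NM=O$. With the orthogonality relations in hand, the three axioms then reduce to routine algebra. Commutativity holds because $(M+N)X=MM^{D}+NN^{D}=M^{D}M+N^{D}N=X(M+N)$, all cross terms dropping out while $M$ and $N$ each commute with their own Drazin inverse. The equation $X(M+N)X=X$ collapses to $M^{D}MM^{D}+N^{D}NN^{D}=M^{D}+N^{D}=X$ once the cross terms are discarded. Finally, choosing any integer $l\geq\max\{\operatorname{Ind}(M),\operatorname{Ind}(N),1\}$ gives $X(M+N)^{l+1}=M^{D}M^{l+1}+N^{D}N^{l+1}=M^{l}+N^{l}=(M+N)^{l}$, where $M^{D}M^{l+1}=M^{l}$ holds since $l\geq\operatorname{Ind}(M)$ and the mixed terms $M^{D}N^{l+1}$, $N^{D}M^{l+1}$ vanish by the established relations.

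By the uniqueness of the Drazin inverse, these three verifications force $X=(M+N)^{D}$, which is the claim. I do not anticipate a genuine obstacle: the only point that requires care is the derivation of the vanishing cross products in the first step, since that is precisely where the commutativity hypothesis $MN=NM=O$ is used, and it hinges on the identities $M^{D}=M(M^{D})^{2}=(M^{D})^{2}M$. Everything thereafter is bookkeeping dictated by those relations.
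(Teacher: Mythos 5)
The paper does not prove this lemma; it is quoted from the literature (the citation to Hartwig--Wang--Wei), so there is no in-paper argument to compare against. Your blind proof is correct and self-contained: the annihilation of all cross terms is derived properly from the identities $M^{D}=M(M^{D})^{2}=(M^{D})^{2}M$ together with $MN=NM=O$, the three Drazin equations then follow by direct expansion, and the appeal to uniqueness with an arbitrary exponent $l\geq\max\{\operatorname{Ind}(M),\operatorname{Ind}(N)\}$ is legitimate, since satisfying $A^{l+1}X=A^{l}$ for some $l$ together with $AX=XA$ and $XAX=X$ already forces $\operatorname{Ind}(A)\leq l$ and pins down $X=A^{D}$. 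This is the standard argument one would find in the cited source.
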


\begin{lemma}\label{ccc}\emph{\cite[Theorem 7.8.4]{campbell2009generalized}}
Let $M,~N\in\mathbb{C}^{n\times{n}}$. Then
\[{(M N)}^{D}=M{({(N M)}^{2})}^{D} N .\]
\end{lemma}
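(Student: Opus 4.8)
The plan is to verify the three defining Drazin equations directly. First I would reduce the statement to a cleaner form: since the Drazin inverse commutes with powers, one has the standard identity $\bigl((NM)^{2}\bigr)^{D}=\bigl((NM)^{D}\bigr)^{2}$, so writing $A=MN$, $B=NM$ and $W=B^{D}$, the claimed identity $(MN)^{D}=M\bigl((NM)^{2}\bigr)^{D}N$ becomes $A^{D}=MW^{2}N$. Setting $X:=MW^{2}N$, the goal is then to show $X=A^{D}$ by checking $AX=XA$, $XAX=X$, and the power relation.

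The computational engine consists of two elementary facts. The first are the absorption identities $BW^{2}=W$ and $W^{2}B=W$, both immediate from the Drazin axioms $BB^{D}=B^{D}B$ and $B^{D}BB^{D}=B^{D}$ for $B=NM$; indeed $BW^{2}=BB^{D}B^{D}=(B^{D}B)B^{D}=B^{D}BB^{D}=B^{D}=W$, and symmetrically for $W^{2}B$. The second is the shift identity $(MN)^{j}=M(NM)^{j-1}N=MB^{j-1}N$ for every $j\ge1$. Using $NM=B$ to collapse the middle factor, one gets $AX=M(NM)W^{2}N=M(BW^{2})N=MWN$ and, symmetrically, $XA=M(W^{2}B)N=MWN$, so $AX=XA$. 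The same mechanism gives $XAX=M(W^{2}B)WN=MW^{2}N=X$. These steps are routine once the absorption identities are in hand.

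For the third relation I would compute, for $q\ge1$,
\[
XA^{q+1}=MW^{2}(NM)B^{q}N=M(W^{2}B)B^{q}N=MWB^{q}N=MB^{D}B^{q}N,
\]
and then invoke $B^{D}B^{q}=B^{q-1}$ to obtain $XA^{q+1}=MB^{q-1}N=A^{q}$. The main obstacle lies precisely here: the relation $B^{D}B^{q}=B^{q-1}$ is guaranteed only for $q\ge\textup{Ind}(NM)+1$, so this computation establishes $XA^{q+1}=A^{q}$ only for all sufficiently large $q$, not necessarily for $q=\textup{Ind}(MN)$ as the literal definition in \eqref{dra} demands.

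To close this gap I would appeal to the standard strengthening of the Drazin characterization: a matrix $X$ satisfying $AX=XA$, $XAX=X$ and $A^{q+1}X=A^{q}$ for \emph{some} positive integer $q$ must equal $A^{D}$. The point is that commutativity splits $X$ along the core--nilpotent decomposition of $A$; on the invertible block the power relation pins $X$ down to the inverse, while $XAX=X$ together with commutativity forces the nilpotent block to vanish, since $A_{0}X_{0}$ is then nilpotent and $X_{0}=X_{0}(A_{0}X_{0})^{m}=0$. Applying this with any $q\ge\textup{Ind}(NM)+1$ yields $X=A^{D}$, which is the assertion.
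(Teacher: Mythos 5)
Your argument is correct. Note first that the paper does not prove this lemma at all: it is imported verbatim from Campbell and Meyer as Theorem 7.8.4 (it is a Cline-formula-type identity), so there is no internal proof to compare against; what you have supplied is a self-contained verification that the paper deliberately omits. Your computation is sound throughout: the reduction via $\bigl((NM)^{2}\bigr)^{D}=\bigl((NM)^{D}\bigr)^{2}$ is the standard power identity for the Drazin inverse, the absorption identities $BW^{2}=W^{2}B=W$ follow directly from the axioms in \eqref{dra}, and the shift identity $(MN)^{j}=MB^{j-1}N$ collapses all three required relations to one-line calculations. You were also right to flag the only delicate point, namely that $\textup{Ind}(MN)$ and $\textup{Ind}(NM)$ need not coincide, so the power relation is obtained only for $q\geq\textup{Ind}(NM)+1$ rather than for $q=\textup{Ind}(MN)$; your resolution via the equivalent characterization of $A^{D}$ as the unique commuting $\{2\}$-inverse satisfying $A^{q+1}X=A^{q}$ for \emph{some} $q$ is correct (indeed, the existence of such an $X$ already forces $q\geq\textup{Ind}(A)$, since $A^{q+1}X=A^{q}$ gives $\operatorname{rank}(A^{q})\leq\operatorname{rank}(A^{q+1})$), and your core--nilpotent splitting argument for that characterization is the standard one. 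In short: the proof is complete and would serve as a legitimate replacement for the citation.
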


Let $A\in\mathbb{C}^{n\times n}$ and $L\leq \mathbb{C}^{n}$.
To discuss some properties and characteristics of the BDD-inverse, we will consider a suitable matrix factorization of $A$ with respect to $L$.
Since there exists a unitary matrix $U \in \mathbb{C}^{n\times n}$ such that
  \begin{equation}\label{E2}
  P_L = U \begin{bmatrix}
    I_l & O \\
    O & O
    \end{bmatrix} U^*,
  \end{equation}
    where $ l = \dim(L)$. A matrix $A$ can be written as
    \begin{equation}\label{E3}
    A = U \begin{bmatrix} 
      A_L & B_L \\
      C_L& E_L
      \end{bmatrix} U^*,
    \end{equation}
    where $A_L \in \mathbb{C}^{l \times l}$, $B_L \in \mathbb{C}^{l \times (n-l)}$, $C_L \in \mathbb{C}^{(n-l) \times l}$and $E_L \in \mathbb{C}^{(n-l) \times (n-l)}$.

\noindent In the following, we provide the matrix block representation of $A_{(L)}^{(D)}$.

\begin{lemma}\label{ler}
    Let $P_{L}$ and $A$ be given by $(\ref*{E2})$ and $(\ref*{E3})$, respectively.
    \begin{equation}\label{E4}
    A^{(D)}_{(L)}=U\begin{bmatrix}{A_{L}}^{D}&O\\O&O \end{bmatrix}U^*.
    \end{equation}
    \textbf{\textit{Proof.}}
    \textup{Using $(\ref*{E2})$ and $(\ref*{E3})$, we get
\begin{equation}\label{E5}
AP_{L}+P_{L^\perp} = U \begin{bmatrix} 
  A_L & O \\
  C_L& I_{n-l}
  \end{bmatrix} U^*.
\end{equation}
It follows from $(\ref*{E5})$ and Lemma $\ref*{tri}$ that
\[{(AP_{L}+P_{L^\perp})}^{D}=U\begin{bmatrix}{A_{L}}^{D}&O\\X_{1}&I_{n-l}\end{bmatrix}U^*,\]
where $X_{1}=C_{L}(\sum_{i= 0}^{k-1}{A_{L}}^{i}{A_{L}}^{\pi}-{A_{L}}^{D})\text{ and }k=\operatorname{ind}(A_{L})$.
By using (\ref*{E1}), we can get (\ref*{E4})}.
\end{lemma}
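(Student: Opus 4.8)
The plan is to compute the block form of $AP_{L}+P_{L^{\perp}}$ directly from the factorizations \eqref{E2} and \eqref{E3}, apply Lemma \ref{tri} to obtain its Drazin inverse, and then left-multiply by $P_{L}$ to read off the stated formula \eqref{E4}.

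First I would multiply out $AP_{L}$: since $\begin{bmatrix} A_L & B_L \\ C_L & E_L \end{bmatrix}\begin{bmatrix} I_l & O \\ O & O \end{bmatrix}=\begin{bmatrix} A_L & O \\ C_L & O \end{bmatrix}$ and $P_{L^{\perp}}=I_n-P_L=U\begin{bmatrix} O & O \\ O & I_{n-l} \end{bmatrix}U^*$, adding the two yields the block lower-triangular matrix $AP_{L}+P_{L^{\perp}}=U\begin{bmatrix} A_L & O \\ C_L & I_{n-l} \end{bmatrix}U^*$, i.e. \eqref{E5}.

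Next I would compute its Drazin inverse. Because $U$ is unitary, ${(AP_{L}+P_{L^{\perp}})}^{D}=U M^{D}U^*$ with $M=\begin{bmatrix} A_L & O \\ C_L & I_{n-l} \end{bmatrix}$, and this $M$ fits the lower-triangular template $N=\begin{bmatrix} E & O \\ B & A \end{bmatrix}$ of Lemma \ref{tri} under the identification $E=A_L$, $B=C_L$, $A=I_{n-l}$. The decisive simplification is that the bottom-right block is the identity, so $A^{D}=I_{n-l}$, $A^{i}=I_{n-l}$, $A^{(i+2)D}=I_{n-l}$, and most importantly $A^{\pi}=I_{n-l}-I_{n-l}=O$, which kills the entire middle sum in the formula for $X$. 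With $r=\operatorname{ind}(I_{n-l})=0$ and $s=\operatorname{ind}(A_L)=k$, the coupling block reduces to $X_{1}=C_L\bigl(\sum_{i=0}^{k-1} A_L^{i}A_L^{\pi}-A_L^{D}\bigr)$, giving ${(AP_{L}+P_{L^{\perp}})}^{D}=U\begin{bmatrix} A_L^{D} & O \\ X_{1} & I_{n-l} \end{bmatrix}U^*$.

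Finally, invoking Definition \ref{D1}, I would left-multiply by $P_{L}=U\begin{bmatrix} I_l & O \\ O & O \end{bmatrix}U^*$; this annihilates the entire bottom block row, so only the $A_L^{D}$ entry survives and \eqref{E4} follows. I expect no genuine obstacle: the whole argument is a routine block computation. The only points that demand care are matching $M$ to the correct ($N$-type, not $M$-type) template in Lemma \ref{tri}, and observing that $A^{\pi}=O$ for the identity block, which is precisely what discards the second summand and collapses $X$ to the clean expression $X_{1}$.
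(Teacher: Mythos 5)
Your proposal is correct and follows essentially the same route as the paper: derive the block lower-triangular form \eqref{E5}, apply the $N$-type case of Lemma \ref{tri} with $E=A_L$, $B=C_L$, $A=I_{n-l}$ (so that $A^{\pi}=O$ collapses the coupling block to $X_{1}$), and left-multiply by $P_{L}$ to obtain \eqref{E4}. No gaps.
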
  

\begin{lemma}\label{mab}\emph{\cite{ma2019characterizations}}
  For $A,B,D,E\in\mathbb{C}^{n\times{n}}$ and $M=\begin{bmatrix} A& AD \\EA& B\end{bmatrix}\in\mathbb{C}^{2n\times{2n}}$,
then
  \[\textup{rank}{(M)}=\textup{rank}{(A)}+\textup{rank}{(B-EAD)}.\]
\end{lemma}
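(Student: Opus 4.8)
The plan is to reduce $M$ to a block–diagonal matrix by pre- and post-multiplying it by invertible matrices, since such operations preserve rank and the rank of a block–diagonal matrix is the sum of the ranks of its diagonal blocks. The feature to exploit is that the off-diagonal blocks are not arbitrary: the lower-left block is $EA$ and the upper-right block is $AD$, so each can be annihilated by a single block row/column operation without ever requiring $A$ to be invertible.

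First I would left-multiply $M$ by the invertible lower-triangular block matrix $\begin{bmatrix} I_n & O \\ -E & I_n \end{bmatrix}$, which subtracts $E$ times the first block row from the second:
\[
\begin{bmatrix} I_n & O \\ -E & I_n \end{bmatrix} M = \begin{bmatrix} A & AD \\ EA - EA & B - EAD \end{bmatrix} = \begin{bmatrix} A & AD \\ O & B - EAD \end{bmatrix},
\]
where the factorized form $EA$ of the lower-left block is exactly what makes that entry vanish. Next I would right-multiply by the invertible upper-triangular block matrix $\begin{bmatrix} I_n & -D \\ O & I_n \end{bmatrix}$, which subtracts $D$ times the first block column from the second and clears the remaining off-diagonal block precisely because it equals $AD$:
\[
\begin{bmatrix} A & AD \\ O & B - EAD \end{bmatrix} \begin{bmatrix} I_n & -D \\ O & I_n \end{bmatrix} = \begin{bmatrix} A & O \\ O & B - EAD \end{bmatrix}.
\]

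Finally, since both transforming matrices are invertible (being block triangular with identity diagonal blocks), the rank is unchanged, and hence
\[
\textup{rank}(M) = \textup{rank}\begin{bmatrix} A & O \\ O & B - EAD \end{bmatrix} = \textup{rank}(A) + \textup{rank}(B - EAD),
\]
as claimed. There is no serious obstacle here; the only point requiring care — and the reason the hypotheses are written with the blocks $AD$ and $EA$ rather than generic blocks — is that the two elimination steps must cancel the off-diagonal blocks exactly. This works because of these factorizations and therefore does not rely on the invertibility of $A$, whereas the naive Schur-complement reduction would require $A^{-1}$.
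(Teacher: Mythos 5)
Your proof is correct: both block-elimination steps are computed accurately, the transforming matrices are invertible, and the conclusion follows. The paper itself states this lemma as a cited result from the literature and gives no proof, so there is nothing to compare against; your argument is the standard block Gaussian elimination that the factored forms $AD$ and $EA$ are designed to enable, and it is surely the intended one.
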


\begin{lemma} [Gramer's~rule]\label{Gramer}
  Let $A\in\mathbb{C}^{n \times n}$ be nonsingular and let $b\in\mathbb{C}^{n}$. Then the unique solution $x=\left[x_{1}, x_{2}, \ldots, x_{n}\right]^{T}$ of the system of linear equations $Ax=b$ is given by
  \[x_{i}=\frac{{\det} A(i \rightarrow b)}{{\det} A}, \quad i=1,2, \ldots, n \text {. }\]
\end{lemma}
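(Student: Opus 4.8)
The plan is to reduce the formula to two elementary determinant identities via a single matrix product, rather than going through the adjugate. Since $A$ is nonsingular, the system $Ax=b$ has the unique solution $x=A^{-1}b$, so it suffices to verify the stated expression for the components of this $x$. The central observation is that replacing the $i$-th column of the identity matrix by $x$ and then left-multiplying by $A$ reproduces exactly $A(i\rightarrow b)$.

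Concretely, let $I_n(i\rightarrow x)$ denote the identity matrix with its $i$-th column replaced by $x=[x_1,\ldots,x_n]^{T}$. First I would compute the product $A\cdot I_n(i\rightarrow x)$ column by column. For $j\neq i$ the $j$-th column of $I_n(i\rightarrow x)$ is the standard basis vector $e_j$, so the corresponding column of the product is $Ae_j$, i.e. the $j$-th column of $A$; for $j=i$ the column is $x$, so the product's $i$-th column is $Ax=b$. Hence $A\cdot I_n(i\rightarrow x)=A(i\rightarrow b)$. Taking determinants and using multiplicativity gives $\det(A)\,\det\!\big(I_n(i\rightarrow x)\big)=\det A(i\rightarrow b)$.

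It then remains to evaluate $\det\!\big(I_n(i\rightarrow x)\big)$. Expanding this determinant by multilinearity in its $i$-th column, the only surviving term corresponds to $e_i$ appearing in position $i$ (every other basis vector $e_j$ with $j\neq i$ would occur twice, forcing that term to vanish), and that term contributes precisely $x_i$. Thus $\det\!\big(I_n(i\rightarrow x)\big)=x_i$, and combining this with $\det A\neq 0$ yields $x_i=\det A(i\rightarrow b)/\det A$ for every $i$, as claimed. There is no real obstacle here, since this is the classical Cramer's rule; the only points needing care are the column-by-column verification of the product identity and the evaluation of $\det\!\big(I_n(i\rightarrow x)\big)$, both of which follow directly from the structure of the identity matrix.
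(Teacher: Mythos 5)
Your proof is correct. The paper states this lemma (classical Cramer's rule) without any proof of its own, treating it as a known auxiliary fact, so there is nothing to compare against; your argument via the product identity $A\cdot I_n(i\rightarrow x)=A(i\rightarrow b)$, multiplicativity of the determinant, and the evaluation $\det\bigl(I_n(i\rightarrow x)\bigr)=x_i$ by multilinearity (the terms with a repeated basis vector vanish) is the standard complete derivation and is sound.
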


\section{The properties of BDD-inverse}
In this section, we study the equivalence condition of $\textup{Ind}{(AP_{L}+P_{L^{\perp}})}=k$
and some basic properties of the BDD-inverse.
\begin{theorem}\label{T1}
Let $A\in\mathbb{C}^{n\times{n}}$, $L\leq\mathbb{C}^n$ and $k~(k\neq{1})$ is some positive integer. The following statements are equivalent:
\\$(a)$ $\textup{Ind}{(AP_{L}+P_{L^{\perp}})}=k$;
\\$(b)$ $\textup{Ind}{(P_{L}A+P_{L^{\perp}})}=k$;
\\$(c)$ $\textup{Ind}{(P_{L}AP_{L}+P_{L^{\perp}})}=k$;
\\$(d)$ $\textup{Ind}{(P_{L}AP_{L})}=k$;
\\$(e)$ $\textup{Ind}{(P_{L}A^{*}P_{L})}=k$.
\end{theorem}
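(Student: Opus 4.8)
The plan is to collapse all five statements to a single scalar condition, $\operatorname{Ind}(A_L)=k$, by exploiting the unitary block decomposition $(\ref*{E2})$--$(\ref*{E3})$; since unitary similarity preserves the rank of every power it preserves the index, so I may work entirely with the $2\times 2$ block patterns. First I would record the block forms. With $P_{L^\perp}=U\operatorname{diag}(O,I_{n-l})U^*$, direct multiplication using $(\ref*{E3})$ gives
\begin{align*}
AP_L+P_{L^\perp}&=U\begin{bmatrix}A_L&O\\C_L&I_{n-l}\end{bmatrix}U^*, &
P_LA+P_{L^\perp}&=U\begin{bmatrix}A_L&B_L\\O&I_{n-l}\end{bmatrix}U^*,\\
P_LAP_L+P_{L^\perp}&=U\begin{bmatrix}A_L&O\\O&I_{n-l}\end{bmatrix}U^*, &
P_LAP_L&=U\begin{bmatrix}A_L&O\\O&O\end{bmatrix}U^*,
\end{align*}
and, because the $(1,1)$ block of $A^*$ is $A_L^*$, also $P_LA^*P_L=U\operatorname{diag}(A_L^*,O)U^*$.

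Next I would read off the index from the ranks of the powers. For $(a)$, $(b)$, $(c)$ the $j$-th power retains the block-triangular (respectively diagonal) shape with $A_L^{\,j}$ in the top-left corner and $I_{n-l}$ in the bottom-right corner; using the identity block to clear whichever off-diagonal entry is present by elementary row or column operations yields
\[\operatorname{rank}\!\begin{bmatrix}A_L^{\,j}&*\\ *&I_{n-l}\end{bmatrix}=\operatorname{rank}(A_L^{\,j})+(n-l)\qquad(j\ge 0).\]
Hence $\operatorname{rank}(M^{k})=\operatorname{rank}(M^{k+1})$ is equivalent to $\operatorname{rank}(A_L^{\,k})=\operatorname{rank}(A_L^{\,k+1})$, so each of $(a)$, $(b)$, $(c)$ holds with index $k$ precisely when $\operatorname{Ind}(A_L)=k$.

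The cases $(d)$ and $(e)$ run the same way except that the bottom-right block is now $O_{n-l}$: the $j$-th power is $\operatorname{diag}(A_L^{\,j},O)$, so $\operatorname{rank}(M^{j})=\operatorname{rank}(A_L^{\,j})$ for $j\ge 1$ while $\operatorname{rank}(M^{0})=n$. This forces $\operatorname{Ind}(P_LAP_L)=\max\{1,\operatorname{Ind}(A_L)\}$, and the identity $\operatorname{rank}((A_L^*)^{j})=\operatorname{rank}(A_L^{\,j})$ gives the same value for $P_LA^*P_L$. This is exactly where the hypothesis $k\neq 1$ is needed: for $k\ge 2$ the equation $\max\{1,\operatorname{Ind}(A_L)\}=k$ forces $\operatorname{Ind}(A_L)=k$, so $(d)$ and $(e)$ again reduce to $\operatorname{Ind}(A_L)=k$, whereas at $k=1$ a nonsingular $A_L$ (of index $0$) would also satisfy $(d)$ and $(e)$, breaking the equivalence.

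Collecting everything, under $k\ge 2$ all of $(a)$--$(e)$ are equivalent to the single condition $\operatorname{Ind}(A_L)=k$, which proves their mutual equivalence. The hard part is not any individual computation but the rank bookkeeping for the powers, and in particular spotting that the zero lower-right block in $(d)$ and $(e)$ pushes the index up to at least $1$; accommodating this is precisely the role played by the restriction $k\neq 1$.
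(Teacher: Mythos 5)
Your proof is correct and follows essentially the same route as the paper: reduce everything to the block decomposition $(\ref{E2})$--$(\ref{E3})$, compute the ranks of the powers blockwise, and collapse all five conditions to $\operatorname{Ind}(A_L)=k$. If anything, you are more explicit than the paper about the cases $(d)$ and $(e)$, where the zero lower-right block gives $\operatorname{Ind}(P_LAP_L)=\max\{1,\operatorname{Ind}(A_L)\}$ and thereby explains the hypothesis $k\neq 1$, a point the paper leaves to ``the rest of the proof follows similarly.''
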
 

\begin{proof}
${(a)}\Leftrightarrow{(b)}$
It follows from (\ref{E5}) that
\[\textup{rank}{(AP_{L}+P_{L^\perp})} =\textup{rank}\begin{bmatrix} 
  {A_L} & O \\
  C_{L}& I_{n-l}
  \end{bmatrix}=\textup{rank}{(A_{L})}+\textup{rank}{(I_{n-l})},\]
  \[\textup{rank}{({(AP_{L}+P_{L^\perp})}^{2})} =\textup{rank}\begin{bmatrix} 
    {A_L}^{2} & O \\
    C_{L}A_{L}+C_{L}& I_{n-l}
    \end{bmatrix}=\textup{rank}{({A_{L}}^{2})}+\textup{rank}{(I_{n-l})}.\]
    Through mathematical induction, we get $\textup{rank}{({(AP_{L}+P_{L^\perp})}^{k})}=
    \textup{rank}{({A_{L}}^{k})}+\textup{rank}{(I_{n-l})}$ and $\textup{rank}{({(AP_{L}+P_{L^\perp})}^{k+1})}=
    \textup{rank}{({A_{L}}^{k+1})}+\textup{rank}{(I_{n-l})}$. Therefore, 
    $\textup{Ind}{(AP_{L}+P_{L^{\perp}})}=k\Leftrightarrow\textup{rank}{({A_{L}}^{k})}=\textup{rank}{({A_{L}}^{k+1})}$.
Similarly, we can conclude that $\textup{Ind}{(P_{L}A+P_{L^{\perp}})}=k\Leftrightarrow\textup{rank}{({A_{L}}^{k})}=\textup{rank}{({A_{L}}^{k+1})}\Leftrightarrow\textup{Ind}{(AP_{L}+P_{L^{\perp}})}=k$.\\
The rest of the proof follows similarly.
  \end{proof}

\noindent Now, we establish some interesting properties of BDD-inverse of $A\in\mathbb{C}^{n\times{n}}$.
\begin{theorem}\label{T2}
      Let $A\in\mathbb{C}^{n\times{n}}$, $L\leq\mathbb{C}^{n}$ and let $k=\textup{Ind}{(AP_{L}+P_{L^{\perp}})}$, $S=\mathcal{R}{({(P_{L}AP_{L})}^{k})}$ and $T=\mathcal{N}{({(P_{L}AP_{L})}^{k})}$, then the following statements hold:
    \vspace{0.2em}
    \\$(a)$ $A_{(L)}^{(D)}={P_L}A_{(L)}^{(D)}=A_{(L)}^{(D)}P_L={P_L}A_{(L)}^{(D)}P_{L}$;
    \\$(b)$ $\mathcal{R} {(A_{(L)}^{(D)})}=S$ and $\mathcal{N}{(A_{(L)}^{(D)})}=T$;
    \\$(c)$ $A_{(L)}^{(D)}=A_{(L)}^{(D)}AA_{(L)}^{(D)}=A_{(L)}^{(D)}A{P_L}A_{(L)}^{(D)}=A_{(L)}^{(D)}{P_L}AA_{(L)}^{(D)}=A_{(L)}^{(D)}{P_L}A{P_L}A_{(L)}^{(D)}$;
    \\$(d)$ $AA_{(L)}^{(D)}=P_{\mathcal{R}{({(AP_{L})}^{k+1})},T}~and~A_{(L)}^{(D)}A=P_{S,\mathcal{N}{({(P_{L}A)}^{k+1})}}$;
    \\$(e)$ $A_{(L)}^{(D)}AP_{L}=P_{L}AA_{(L)}^{(D)}=P_{S,T}$;
    \\$(f)$ $A_{(L)}^{(D)}=A_{S,T}^{(2)}={(AP_L)}_{S,T}^{(2)}={(P_{L}A)}_{S,T}^{(2)}={({P_L}A{P_L})}_{S,T}^{(2)}$;
    \\$(g)$ ${(A^{*})}_{(L)}^{(D)}={(A_{(L)}^{(D)})}^{*}$;
    \\$(h)$
    $A^{(D)}_{(L)}={({P_{L}AP_{L}})}^{D}=P_L{({P_L}A{P_L})}^{D}={({P_L}A{P_L})}^{D}P_L$\\
    $~~~~~~~~~~~~=P_L{({P_L}A{P_L} + P_{L^\perp})}^{D}={({P_L}A{P_L} + P_{L^\perp})}^{D}P_L$
    \\$~~~~~~~~~~~~={({P_L}A{P_L} + P_{L^\perp})}^{D}-P_{L^\perp}={(P_{L}A+P_{L^{\perp}})}^{D}P_{L}$
   \\$~~~~~~~~~~~~=P_{L}{(AP_{L})}^{D}={(P_{L}A)}^{D}P_{L}=P_{L}({(AP_{L})}^{2})^{D}AP_{L}$
    \\$~~~~~~~~~~~~=P_{L}A{({(P_{L}A)}^{2})}^{D}P_{L}$.
  \end{theorem}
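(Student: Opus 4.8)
The plan is to reduce every assertion of Theorem \ref{T2} to the single block model of Lemma \ref{ler}. Fixing the unitary $U$ of $(\ref{E2})$ and writing $A$ as in $(\ref{E3})$, direct multiplication gives
\[
P_{L}AP_{L}=U\left[\begin{smallmatrix}A_L&O\\O&O\end{smallmatrix}\right]U^{*},\qquad
AP_{L}=U\left[\begin{smallmatrix}A_L&O\\C_L&O\end{smallmatrix}\right]U^{*},\qquad
P_{L}A=U\left[\begin{smallmatrix}A_L&B_L\\O&O\end{smallmatrix}\right]U^{*},
\]
while $A_{(L)}^{(D)}=U\left[\begin{smallmatrix}A_L^{D}&O\\O&O\end{smallmatrix}\right]U^{*}$ by $(\ref{E4})$. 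The decisive preliminary remark is that $\operatorname{ind}(A_L)=k$: the rank relation $\textup{rank}((AP_L+P_{L^\perp})^{m})=\textup{rank}(A_L^{m})+(n-l)$ established in the proof of Theorem \ref{T1} forces $\textup{Ind}(AP_L+P_{L^\perp})=\textup{Ind}(A_L)$. This in turn unlocks the standard Drazin facts $\mathcal{R}(A_L^{D})=\mathcal{R}(A_L^{k})$, $\mathcal{N}(A_L^{D})=\mathcal{N}(A_L^{k})$, $A_L A_L^{D}=A_L^{D}A_L$ and $A_L^{D}A_L A_L^{D}=A_L^{D}$. Since $(P_LAP_L)^{k}=U\left[\begin{smallmatrix}A_L^{k}&O\\O&O\end{smallmatrix}\right]U^{*}$, I also record at the outset that $S=U(\mathcal{R}(A_L^{k})\oplus\{0\})$ and $T=U(\mathcal{N}(A_L^{k})\oplus\mathbb{C}^{n-l})$.

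With this dictionary the purely algebraic items are immediate. For $(a)$ one multiplies the block of $P_L$ on either side of that of $A_{(L)}^{(D)}$ and notes that $I_l$ fixes the $(1,1)$ entry. For $(c)$, after expanding the blocks of $A$, $AP_L$, $P_LA$ and $P_LAP_L$, each of the four products collapses to $A_L^{D}A_L A_L^{D}=A_L^{D}$ in the $(1,1)$ slot with zeros elsewhere. For $(g)$ one observes that the $(1,1)$ block of $(A^{*})_{(L)}^{(D)}$ is $(A_L^{*})^{D}=(A_L^{D})^{*}$, which is exactly the $(1,1)$ block of $(A_{(L)}^{(D)})^{*}$. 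Item $(f)$ then requires no fresh computation: $(b)$ supplies the range $S$ and null space $T$, while the four equalities of $(c)$ are precisely the outer-inverse relations $A_{(L)}^{(D)}MA_{(L)}^{(D)}=A_{(L)}^{(D)}$ for $M\in\{A,\,AP_L,\,P_LA,\,P_LAP_L\}$, so the uniqueness of $A_{S,T}^{(2)}$ forces all four coincidences.

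The geometric items $(b)$, $(e)$, $(d)$ carry the real content. For $(b)$ the zero blocks give $\mathcal{R}(A_{(L)}^{(D)})=U(\mathcal{R}(A_L^{D})\oplus\{0\})$ and $\mathcal{N}(A_{(L)}^{(D)})=U(\mathcal{N}(A_L^{D})\oplus\mathbb{C}^{n-l})$, which equal $S$ and $T$ by the range/null-space identities for $A_L^{D}$ recorded above. For $(e)$, both $A_{(L)}^{(D)}AP_L$ and $P_LAA_{(L)}^{(D)}$ reduce to $U\left[\begin{smallmatrix}A_L^{D}A_L&O\\O&O\end{smallmatrix}\right]U^{*}$ (using $A_LA_L^{D}=A_L^{D}A_L$), and the spectral idempotent $A_L^{D}A_L$ has range $\mathcal{R}(A_L^{k})$ and null space $\mathcal{N}(A_L^{k})$, so the product is $P_{S,T}$. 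Item $(d)$ is the main obstacle. The plan is to use the induction $(AP_L)^{m}=U\left[\begin{smallmatrix}A_L^{m}&O\\C_LA_L^{m-1}&O\end{smallmatrix}\right]U^{*}$ and then to prove $\mathcal{R}(AA_{(L)}^{(D)})=\mathcal{R}((AP_L)^{k+1})$ by the substitution $w=A_L^{k}u$, which identifies both spaces with $\{U\left[\begin{smallmatrix}A_L w\\C_L w\end{smallmatrix}\right]:w\in\mathcal{R}(A_L^{k})\}$; the null space of the idempotent $AA_{(L)}^{(D)}$ is $T$ because $A_LA_L^{D}x=0$ already forces $x\in\mathcal{N}(A_L^{k})$. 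The companion identity $A_{(L)}^{(D)}A=P_{S,\mathcal{N}((P_LA)^{k+1})}$ follows symmetrically with $P_LA$ replacing $AP_L$.

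Finally, the chain $(h)$ is a string of block evaluations glued by the three composition lemmas. Because $P_LAP_L$ is block diagonal, $(P_LAP_L)^{D}=A_{(L)}^{(D)}$ at once, and pre- or post-multiplying by $P_L$ changes nothing; adjoining $P_{L^\perp}$ gives $U\left[\begin{smallmatrix}A_L&O\\O&I_{n-l}\end{smallmatrix}\right]U^{*}$, whose Drazin is $U\left[\begin{smallmatrix}A_L^{D}&O\\O&I_{n-l}\end{smallmatrix}\right]U^{*}$, so multiplying by $P_L$ or subtracting $P_{L^\perp}$ again returns $A_{(L)}^{(D)}$. For the triangular shapes $P_LA+P_{L^\perp}$, $AP_L$ and $P_LA$ I would invoke Lemmas \ref{tri} and \ref{aco}, whose formulas place $A_L^{D}$ in the $(1,1)$ block and confine every other (possibly nonzero) block to positions that the accompanying factor $P_L$ annihilates, again leaving $A_{(L)}^{(D)}$. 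The last two equalities are instances of Lemma \ref{ccc}: with $M=P_L$, $N=AP_L$ one has $MN=P_LAP_L$ and $NM=AP_L$ (since $P_L^{2}=P_L$), whence $(P_LAP_L)^{D}=P_L((AP_L)^{2})^{D}AP_L$; with $M=P_LA$, $N=P_L$ one gets $(P_LAP_L)^{D}=P_LA((P_LA)^{2})^{D}P_L$. Thus the crux of the whole theorem is the index matching $\operatorname{ind}(A_L)=k$, which makes the Drazin spectral identities available, together with the single range computation in $(d)$; all remaining items are bookkeeping in the block model.
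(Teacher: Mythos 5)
Your proposal is correct and follows essentially the same route as the paper: both reduce everything to the block model of Lemma \ref{ler} (with the key observation $\operatorname{ind}(A_L)=k$ implicit in the paper's Lemma \ref{ler} and Theorem \ref{T1}), and both invoke Lemmas \ref{tri}, \ref{aco} and \ref{ccc} for the chain in $(h)$. The only divergence is cosmetic: for $(b)$, $(d)$ and $(g)$ the paper argues abstractly with range/null-space inclusions, rank counts and the uniqueness of $A^{(2)}_{S,T}$, whereas you carry out the equivalent direct block computations.
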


 \begin{proof}
$(a)$.  Premultiplying ${(\ref{E1})}$ with $P_{L}$, we have $A_{(L)}^{(D)}={P_L}A_{(L)}^{(D)}$.
Using $(\ref{E2})$ and $(\ref{E4})$, we have
\begin{align*}
A^{(D)}_{(L)}P_{L}
&=U\begin{bmatrix}{A_{L}}^{D}&O\\O&O \end{bmatrix}
\begin{bmatrix}
  I_l & O \\
  O & O
\end{bmatrix}U^*\\
&=U\begin{bmatrix}{A_{L}}^{D}&O\\O&O \end{bmatrix}U^*=A^{(D)}_{(L)}.
\end{align*}
It follows from $A_{(L)}^{(D)}={P_L}A_{(L)}^{(D)}=A_{(L)}^{(D)}P_L$ that $A_{(L)}^{(D)}=P_{L}A_{(L)}^{(D)}P_L$.

\vspace*{0.5em}
\noindent $(b)$. By $(\ref{E1})$, we get
\begin{align*} 
  \mathcal{R}(A_{(L)}^{(D)})
  &=P_{L}\mathcal{R}{({(AP_L + P_{L^\perp})}^{D})}\\
  &=P_{L}\mathcal{R}{({(AP_L + P_{L^\perp})}^{k})}\\
  &=\mathcal{R}{({(P_{L}AP_{L})}^{k})}=S.
  \end{align*}
It follows that $\textup{rank}{(A_{(L)}^{(D)})}=\textup{rank}{({(P_{L}AP_{L})}^{k})}$.
From ${(\ref{E1})}$ and ${(\ref{dra})}$, we have 
\begin{align*}
\mathcal{N}{(A_{(L)}^{(D)})}&=\mathcal{N}{(P_{L}{(AP_L + P_{L^\perp})}^{D})}\\
&\subset\mathcal{N}{({(P_{L}AP_{L})}^{k+1}{(AP_L + P_{L^\perp})}^{D})}\\
&=\mathcal{N}{(P_{L}{{(AP_L + P_{L^\perp})}^{k+1}}{(AP_L + P_{L^\perp})}^{D})}\\
&=\mathcal{N}{(P_{L}{{(AP_L + P_{L^\perp})}^{k}})}\\
&=\mathcal{N}{({{(P_{L}AP_{L})}^{k}})}.
\end{align*}
Since $\mathcal{N}{(A_{(L)}^{(D)})}\subset\mathcal{N}{({{(P_{L}AP_{L})}^{k}})}=T$ and
$\textup{rank}{(A_{(L)}^{(D)})}=\textup{rank}{({(P_{L}AP_{L})}^{k})}$, it is easily obtained that $\mathcal{N}(A_{(L)}^{(D)})=T$.

\vspace*{0.5em}
\noindent $(c)$. From $(\ref{E3})$ and $(\ref*{E4})$, we infer that
\begin{align*}
A_{(L)}^{(D)}AA_{(L)}^{(D)}
&=U\begin{bmatrix}{A_{L}}^{D}&O\\O&O \end{bmatrix}\begin{bmatrix} 
    A_L & B_L \\
    C_L& E_L
    \end{bmatrix}\begin{bmatrix}{A_{L}}^{D}&O\\O&O \end{bmatrix}U^*\\
&=U\begin{bmatrix}{A_{L}}^{D}&O\\O&O \end{bmatrix}U^*=A_{(L)}^{(D)}.
\end{align*}
Using ${(a)}$ and $A_{(L)}^{(D)}=A_{(L)}^{(D)}AA_{(L)}^{(D)}$, we directly get
$A_{(L)}^{(D)}=A_{(L)}^{(D)}AA_{(L)}^{(D)}=A_{(L)}^{(D)}AP_{L}A_{(L)}^{(D)}
=A_{(L)}^{(D)}P_{L}AP_{L}A_{(L)}^{(D)}=A_{(L)}^{(D)}P_{L}AA_{(L)}^{(D)}$.

\vspace*{0.5em}
\noindent  ${(d)}$. From the first equation in ${(c)}$, we obtain $AA_{(L)}^{(D)}AA_{(L)}^{(D)}=AA_{(L)}^{(D)}$.
Also, ${(b)}$ implies that $\mathcal{R}{(AA_{(L)}^{(D)})}=A\mathcal{R}{(A_{(L)}^{(D)})}=AS=\mathcal{R}{({(AP_{L})}^{k+1})}$ and
$\mathcal{N}{(AA_{(L)}^{(D)})}=\mathcal{N}{(A_{(L)}^{(D)})}=T$.
Thus $AA_{(L)}^{(D)}=P_{\mathcal{R}{({{(AP_{L})}^{k+1}})},T}$.
Similarly, we have $A_{(L)}^{(D)}A=P_{S,{({A^*}T^\perp)}^\perp}$.
Now, from the equality ${{(A^{*}T^\perp)}^\perp}={{(A^{*}\mathcal{R}{({(P_{L}A^{*}P_{L})}^{k})})}^{\perp}}=
{\mathcal{R}{({(A^{*}P_{L})}^{k+1})}^{\perp}}=\mathcal{N}{({(P_{L}A)}^{k+1})}$,
it follows that $A_{(L)}^{(D)}A=P_{S,\mathcal{N}{({(P_{L}A)}^{k+1})}}$.

\vspace*{0.5em}
\noindent ${(e)}$. 
By ${(c)}$, we have
\[P_{L}AA_{(L)}^{(D)}P_{L}AA_{(L)}^{(D)}=P_{L}AA_{(L)}^{(D)}\]and
\[\mathcal{N}(A_{(L)}^{(D)})=\mathcal{N}(A_{(L)}^{(D)}P_{L}AA_{(L)}^{(D)})\geq\mathcal{N}(P_{L}AA_{(L)}^{(D)}).\]
Since $\mathcal{N}{(P_{L}AA_{(L)}^{(D)})}\geq\mathcal{N}{(A_{(L)}^{(D)})}$
and ${(b)}$, we get $\mathcal{N}{(P_{L}AA_{(L)}^{(D)})}=\mathcal{N}{(A_{(L)}^{(D)})}=T$
and $\textup{rank}{(P_{L}AA_{(L)}^{(D)})}=\textup{rank}{({(P_{L}AP_{L})}^{k})}$.
Using $(\ref{E1})$, we have
\begin{align*}
\mathcal{R}{(P_{L}AA_{(L)}^{(D)})}&=P_{L}AP_{L}\mathcal{R}{({(AP_L + P_{L^\perp})}^{D})}\\
&=P_{L}AP_{L}\mathcal{R}{({(AP_L + P_{L^\perp})}^{k})}\\
&=\mathcal{R}{({(P_{L}AP_{L})}^{k+1})}
\end{align*}
Considering $\mathcal{R}{(P_{L}AA_{(L)}^{(D)})}\leq\mathcal{R}{({(P_{L}AP_{L})}^{k})}$
and $\textup{rank}{(P_{L}AA_{(L)}^{(D)})}=\textup{rank}{({(P_{L}AP_{L})}^{k})}$,
it implies that $\mathcal{R}{(P_{L}AA_{(L)}^{(D)})}=S$.
Hence $P_{L}AA_{(L)}^{(D)}=P_{S,T}$. Analogously, we can prove $A_{(L)}^{(D)}AP_{L}=P_{S,T}$.

\vspace{0.5em}
\noindent ${(f)}$. It is apparent from ${(a)}$, ${(b)}$ and ${(c)}$.

\vspace{0.5em}
\noindent ${(g)}$. By ${(f)}$, we have $A_{(L)}^{(D)}=A_{S,T}^{(2)}$.
Thus, ${(A^{*})}_{(L)}^{(D)}={(A^{*})}_{\mathcal{R}{({(P_{L}{A^*}P_{L})}^{k})},\mathcal{N}{({(P_{L}{A}^{*}P_{L})}^{k})}}^{(2)}={(A^{*})}_{T^{\perp},S^{\perp}}^{(2)}={(A_{S,T}^{(2)})}^{*}={(A_{(L)}^{(D)})}^{*}$.

\vspace{0.5em}
\noindent ${(h)}$. Notice that for $\textup{Ind}{(B)}=k$, $B^{D}=B_{\mathcal{R}{(B^{k})},\mathcal{N}{(B^{k})}}^{(2)}$ (see {\cite{wang2018generalized}}).
Using the fourth equation in $(f)$, we have $A_{(L)}^{(D)}={(P_{L}AP_{L})}_{S,T}^{(2)}={(P_{L}AP_{L})}^{D}$.
From ${(a)}$, we know that $A_{(L)}^{(D)}={(P_{L}AP_{L})}^{D}=P_{L}{(P_{L}AP_{L})}^{D}={(P_{L}AP_{L})}^{D}P_{L}$.
Furthermore, using  ${(\ref{E1})}$ we have that 
\begin{align*}
A_{(L)}^{(D)}&={(P_{L}AP_{L})}^{D}={(P_{L}(P_{L}AP_{L})P_{L})}^{D}\\
&={({P_L}A{P_L})}^{(D)}_{(L)}=P_{L}{({P_L}A{P_L} + P_{L^\perp})}^{D}.
\end{align*}
By Lemma $\ref{pq0}$, we can verify that ${({P_L}A{P_L} + P_{L^\perp})}^{D}={({P_L}A{P_L})}^{D}+ P_{L^\perp}$.
Therefore, $A_{(L)}^{(D)}={({P_L}A{P_L})}^{D}={({P_L}A{P_L} + P_{L^\perp})}^{D}-P_{L^\perp}$.
By ${(a)}$, we can deduce that 
$A_{(L)}^{(D)}=A_{(L)}^{(D)}P_{L}={({({P_L}A{P_L} + P_{L^\perp})}^{D}-P_{L^\perp})}P_{L}={({P_L}A{P_L} + P_{L^\perp})}^{D}P_{L}$.
 Based on $(\ref{E2})$, $(\ref{E3})$ and Lemma $\ref{tri}$ we inferred that
\begin{align*}
  {(P_{L}A+P_{L^{\perp}})}^{D}P_{L}
&=U{\begin{bmatrix} 
  A_L & B_L \\
  O& I_{n-l}
  \end{bmatrix}}^{D}\begin{bmatrix}
    I_l & O \\
    O & O
    \end{bmatrix}U^*\\
&=U\begin{bmatrix} 
  {A_L}^{D} & X_2 \\
  O& I_{n-l}
  \end{bmatrix}\begin{bmatrix}
    I_l & O \\
    O & O
    \end{bmatrix}U^*\\
&= U\begin{bmatrix} 
  {A_L}^{D} & O \\
  O& O
  \end{bmatrix}U^*=A_{(L)}^{(D)},
\end{align*}
where $X_{2}={A_{L}}^{\pi}\sum_{i=0}^{k-1}{A_{L}}^{i}B_{L}-{A_{L}}^{D}B_{L}$.

\par Let $P_{L}$, $A$ and $A_{(L)}^{(D)}$ are expressed as in (\ref{E2}), (\ref{E3}) and (\ref{E4}), respectively.
From Lemma {\ref{aco}}, we derive
\begin{align*}
P_{L}{(AP_{L})}^{D}&=U \begin{bmatrix}
  I_l & O \\
  O & O\end{bmatrix}
  {\begin{bmatrix}
    A_{L} & O\\
    C_{L} & O
    \end{bmatrix}}^{D}U^*\\
&=U \begin{bmatrix}
  I_l & O \\
  O & O\end{bmatrix}
  \begin{bmatrix}
    {A_{L}}^{D} & O \\
    C_{L}{({A_{L}}^{D})}^{2} & O
    \end{bmatrix}U^*\\
&=U\begin{bmatrix}
  {A_{L}}^{D} & O \\
  O & O
  \end{bmatrix}U^*=A_{(L)}^{(D)}
  \end{align*}
and
\begin{align*}
  {(P_{L}A)}^{D}P_{L}&=U{\begin{bmatrix}
    A_{L} & B_{L}  \\
    O & O
    \end{bmatrix}}^{D}\begin{bmatrix}
      I_l & O \\
      O & O\end{bmatrix}U^*\\
  &=U\begin{bmatrix}
    {A_{L}}^{D} & {({A_{L}}^{D})}^{2}B_{L}  \\
    O & O
    \end{bmatrix}\begin{bmatrix}
      I_l & O \\
      O & O\end{bmatrix}U^*\\
  &=U\begin{bmatrix}
    {A_{L}}^{D} & O \\
    O & O
    \end{bmatrix}U^*=A_{(L)}^{(D)}.
\end{align*}    
We note that from Lemma {\ref{ccc}} it is easy to obtain
$A_{(L)}^{(D)}=P_{L}{({(AP_{L})}^{2})}^{D}AP_{L}=P_{L}A{({(P_{L}A)}^{2})}^{D}P_{L}$.
\end{proof}


\section{Some characterizations of the Bott-Duffin Drazin inverse}
\noindent In this section, we mainly discuss the characteristics of the BDD-inverse from the aspects of range space, null space, matrix equations and projectors.
\begin{theorem}\label{T3}
Let $A \in \mathbb{C}^{n\times n}$, $L \leq  \mathbb{C}^n$, $k=\textup{Ind}{(AP_{L}+P_{L^{\perp}})}$, $S=\mathcal{R}{({(P_{L}AP_{L})}^{k})}$, $T=\mathcal{N}{({(P_{L}AP_{L})}^{k})}$ and let $X \in \mathbb{C}^{n\times n}$. The following statements are equivalent:
\\$(a)~~X=A_{(L)}^{(D)}$;
\\$(b)~~\mathcal{R}(X)=S$, $AX=P_{\mathcal{R}{({(AP_{L})}^{k+1})},T}$;
\\$(c)~~\mathcal{R}(X)=S$, $P_{L}AX=P_{S,T}$;
\\$(d)~~\mathcal{R}(X)=S$, $XA=P_{S,\mathcal{N}{({(P_{L}A)}^{k+1})}}$, $XP_{T^{\perp}}=X$.
\end{theorem}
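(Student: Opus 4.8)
The plan is to prove the forward implications $(a)\Rightarrow(b),(c),(d)$ all at once by substitution, and then to establish each converse $(b)\Rightarrow(a)$, $(c)\Rightarrow(a)$, $(d)\Rightarrow(a)$ by a uniform ``multiply by $A_{(L)}^{(D)}$ and collapse a projector'' argument. The forward direction is immediate from Theorem~\ref{T2}: if $X=A_{(L)}^{(D)}$, then $\mathcal{R}(X)=S$ is Theorem~\ref{T2}$(b)$; the equalities $AX=P_{\mathcal{R}((AP_L)^{k+1}),T}$ and $P_LAX=P_{S,T}$ are Theorem~\ref{T2}$(d)$ and $(e)$; the identity $XA=P_{S,\mathcal{N}((P_LA)^{k+1})}$ is Theorem~\ref{T2}$(d)$; and $XP_{T^\perp}=X$ follows from $\mathcal{N}(X)=T$ (Theorem~\ref{T2}$(b)$), since that gives $XP_T=O$.

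For the converses $(b)$ and $(c)$ the key observation is that each prescribed equation forces $AX$ (resp.\ $P_LAX$) to coincide with $AA_{(L)}^{(D)}$ (resp.\ $P_LAA_{(L)}^{(D)}$) by comparison with Theorem~\ref{T2}$(d)$ (resp.\ $(e)$). I would then premultiply by $A_{(L)}^{(D)}$: in case $(b)$ the absorption identity $A_{(L)}^{(D)}AA_{(L)}^{(D)}=A_{(L)}^{(D)}$ from Theorem~\ref{T2}$(c)$ yields $A_{(L)}^{(D)}AX=A_{(L)}^{(D)}$ at once, while in case $(c)$ the relations $A_{(L)}^{(D)}P_L=A_{(L)}^{(D)}$ (Theorem~\ref{T2}$(a)$) and $A_{(L)}^{(D)}P_LAA_{(L)}^{(D)}=A_{(L)}^{(D)}$ (Theorem~\ref{T2}$(c)$) reduce it to the same equation $A_{(L)}^{(D)}AX=A_{(L)}^{(D)}$. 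To finish I would note that $A_{(L)}^{(D)}A=P_{S,\mathcal{N}((P_LA)^{k+1})}$ (Theorem~\ref{T2}$(d)$) is a projector with range $S$; since $\mathcal{R}(X)=S$, this projector fixes every column of $X$, so $A_{(L)}^{(D)}AX=X$, and hence $X=A_{(L)}^{(D)}$.

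The implication $(d)\Rightarrow(a)$ is the one I expect to demand more care, and I would handle it by first upgrading the hypotheses to $\mathcal{N}(X)=T$. Because $P_LAP_L$ has index $k$ by Theorem~\ref{T1}$(d)$, its core--nilpotent splitting gives $S\oplus T=\mathbb{C}^n$, so $\dim T=n-\dim S=n-\textup{rank}(X)=\dim\mathcal{N}(X)$; the relation $XP_{T^\perp}=X$ is equivalent to $XP_T=O$, i.e.\ $T\subseteq\mathcal{N}(X)$, and the dimension count then forces $\mathcal{N}(X)=T$. Unlike the previous cases I would now \emph{post}-multiply: comparing the hypothesis with Theorem~\ref{T2}$(d)$ gives $XA=A_{(L)}^{(D)}A$, whence $XAA_{(L)}^{(D)}=A_{(L)}^{(D)}AA_{(L)}^{(D)}=A_{(L)}^{(D)}$ by Theorem~\ref{T2}$(c)$, so it suffices to prove $X=XAA_{(L)}^{(D)}$. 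Here the refined null-space information enters: by Theorem~\ref{T2}$(d)$ the matrix $I-AA_{(L)}^{(D)}$ is the projector complementary to $P_{\mathcal{R}((AP_L)^{k+1}),T}$, so $\mathcal{R}(I-AA_{(L)}^{(D)})=T=\mathcal{N}(X)$, which gives $X(I-AA_{(L)}^{(D)})=O$ and therefore $X=XAA_{(L)}^{(D)}=A_{(L)}^{(D)}$.

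The main obstacle is thus isolated in the $(d)$ direction: recognizing that the side condition $XP_{T^\perp}=X$ together with $\mathcal{R}(X)=S$ pins down $\mathcal{N}(X)=T$ via the decomposition $S\oplus T=\mathbb{C}^n$, and then correctly electing to post-multiply so that the complementary projector of $AA_{(L)}^{(D)}$, whose range is exactly $T$, annihilates $X$. In the $(b)$ and $(c)$ directions the only genuine point is to select the correct absorption identity from Theorem~\ref{T2}$(c)$ so that premultiplication by $A_{(L)}^{(D)}$ telescopes the right-hand side down to $A_{(L)}^{(D)}$.
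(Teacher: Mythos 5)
Your proof is correct, and it reaches the conclusion by a genuinely different organization than the paper. The paper proves the theorem as a single cycle $(a)\Rightarrow(b)\Rightarrow(c)\Rightarrow(d)\Rightarrow(a)$: the intermediate links are nearly free (e.g.\ $(b)\Rightarrow(c)$ is just left-multiplication by $P_L$), and the only real work is accumulating $XAX=X$, $\mathcal{R}(X)=S$, $\mathcal{N}(X)=T$ so that the final link can invoke the uniqueness of the outer inverse $A^{(2)}_{S,T}$ from Theorem~\ref{T2}$(f)$. You instead prove each converse $(b),(c),(d)\Rightarrow(a)$ independently, by matching the hypothesized projector equation with the corresponding identity for $A_{(L)}^{(D)}$ from Theorem~\ref{T2}$(d)$--$(e)$ and then multiplying on the side that makes a known projector collapse onto $X$: in $(b)$ and $(c)$ the projector $A_{(L)}^{(D)}A=P_{S,\mathcal{N}((P_LA)^{k+1})}$ fixes $X$ because $\mathcal{R}(X)=S$, and in $(d)$ the complementary projector $I_n-AA_{(L)}^{(D)}$, whose range is $T=\mathcal{N}(X)$, annihilates $X$ on the right. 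This avoids any appeal to the uniqueness of $A^{(2)}_{S,T}$ and keeps each equivalence self-contained, at the cost of three separate arguments where the paper needs essentially one; your dimension count upgrading $XP_{T^\perp}=X$ to $\mathcal{N}(X)=T$ via $S\oplus T=\mathbb{C}^n$ is the same device the paper uses, just deployed in its step $(d)\Rightarrow(a)$ (and, with the extra observation $L^\perp\leq T$, in $(c)\Rightarrow(d)$). Both routes lean entirely on Theorem~\ref{T2}, so neither is more general; yours is slightly more robust in that any one of $(b)$--$(d)$ could be dropped without breaking the remaining equivalences.
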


\begin{proof}
  ${(a)}\Rightarrow{(b)}$. It is evident from Theorem $\ref{T2}$ ${(b)}$ and ${(d)}$. 

  \noindent ${(b)}\Rightarrow{(c)}$. By ${(d)}$ and ${(e)}$ from Theorem $\ref{T2}$, we have $P_{L}AX = P_{L}P_{\mathcal{R}{({(AP_{L})}^{k+1})},T}= P_{S,T}$.
  
  \noindent  ${(c)}\Rightarrow{(d)}$. It follows from $\mathcal{R}{(X)}=S$ and $P_{L}AX = P_{S,T}$ that $\textup{rank}{(X)}=\dim{(S)}$ and $\mathcal{N}{(X)}=T$.
  Since $\mathcal{N}{(X)}=T$ and $L^{\perp}\leq{T}$, we conclude  that $XP_{T^{\perp}}=XP_{L}=X$. Therefore, $XAX = XP_{L}AX = XP_{S,T} = X$.
  From $XAX = X$, it is clear that $XAXA = XA$ and $\mathcal{R}{(XA)}=\mathcal{R}{(X)}=S$. Now, it follows from $\mathcal{N}{(X)}=T$ that $\mathcal{N}{(XA)}={(\mathcal{R}{({A^{*}X^{*}})})}^{\perp}={(A^{*}\mathcal{N}{(X)}^{\perp})}^{\perp}={(A^{*} T^{\perp})}^{\perp}=\mathcal{N}{({(P_{L}A)}^{k+1})}$.
  As a consequence, $XA=P_{S,\mathcal{N}{({(P_{L}A)}^{k+1})}}$.
  
  \noindent ${(d)}\Rightarrow(a)$. Since $\mathcal{R}{(X)}=S$ and $XA=P_{S,\mathcal{N}{({(P_{L}A)}^{k+1})}}$, we have  $XAX=X$. From $\textup{rank}{(X)}=\dim(S)$ and $XP_{T^{\perp}}=X$,
  we get  $\mathcal{N}{(X)}=T$. Finally By ${(f)}$ in Theorem $\ref{T2}$, it follows that $X=A_{(L)}^{(D)}$.
\end{proof}

\begin{theorem}\label{T4}
Let $A \in \mathbb{C}^{n\times n}$, $L \leq  \mathbb{C}^n$, $k=\textup{Ind}{(AP_{L}+P_{L^{\perp}})}$, $S=\mathcal{R}{({(P_{L}AP_{L})}^{k})}$, $T=\mathcal{N}{({(P_{L}AP_{L})}^{k})}$ and let $X \in \mathbb{C}^{n\times n}$. The following statements are equivalent:
\\$(a)~~X=A_{(L)}^{(D)}$;
\\$(b)~~\mathcal{N}{(X)}=T$, $XA=P_{S,\mathcal{N}{({(P_{L}A)}^{k+1})}}$;
\\$(c)~~\mathcal{N}{(X)}=T$, $XA{P_L}=P_{S,T}$;
\\$(d)~~\mathcal{N}{(X)}=T$, $AX=P_{\mathcal{R}{({(AP_{L})}^{k+1})},T}$, $P_{S}X=X$.
\end{theorem}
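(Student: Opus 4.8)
The plan is to mirror the structure of the proof of Theorem~\ref{T3}, establishing the cycle $(a)\Rightarrow(b)\Rightarrow(c)\Rightarrow(d)\Rightarrow(a)$, since Theorem~\ref{T4} is the natural ``dual'' of Theorem~\ref{T3} obtained by interchanging the roles of range and null space. The implication $(a)\Rightarrow(b)$ is immediate from Theorem~\ref{T2}: part $(b)$ of that theorem gives $\mathcal{N}(A_{(L)}^{(D)})=T$, and part $(d)$ gives $A_{(L)}^{(D)}A=P_{S,\mathcal{N}{({(P_{L}A)}^{k+1})}}$, which is exactly statement $(b)$ here.

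For $(b)\Rightarrow(c)$, I would postmultiply the hypothesis $XA=P_{S,\mathcal{N}{({(P_{L}A)}^{k+1})}}$ by $P_L$ and use Theorem~\ref{T2}$(e)$, which states $A_{(L)}^{(D)}A P_L = P_{S,T}$. The key observation is that the oblique projector $P_{S,\mathcal{N}{({(P_{L}A)}^{k+1})}}$ acts on $P_L$ to produce $P_{S,T}$; concretely, since $\mathcal{N}{({(P_{L}A)}^{k+1})}$ and $\mathcal{N}{({(P_{L}AP_{L})}^{k})}=T$ agree after restriction by $P_L$ (as already exploited in Theorem~\ref{T2}), we obtain $XAP_L = P_{S,\mathcal{N}{({(P_{L}A)}^{k+1})}}P_L = P_{S,T}$.

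The heart of the argument is $(c)\Rightarrow(d)$, and this is where I expect the main work to lie. From $\mathcal{N}{(X)}=T$ and $XAP_L=P_{S,T}$ I would first deduce the rank identity $\textup{rank}{(X)}=\dim{(S)}$ together with $\mathcal{R}{(X)}=S$, arguing from the range of the projector $P_{S,T}$ exactly as in the corresponding step of Theorem~\ref{T3}. Next, since $L^\perp\leq T=\mathcal{N}(X)$ forces $XP_{L}=X$ (equivalently $P_{S}$ absorbs on the appropriate side), I would verify the outer-inverse relation $XAX=XAP_{L}X=P_{S,T}X=X$, using $\mathcal{R}(X)=S$ so that $P_{S,T}$ fixes $X$. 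From $XAX=X$ it follows that $AXAX=AX$, so $AX$ is idempotent with $\mathcal{N}{(AX)}=\mathcal{N}{(X)}=T$ and $\mathcal{R}{(AX)}=A\mathcal{R}{(X)}=AS=\mathcal{R}{({(AP_{L})}^{k+1})}$, giving $AX=P_{\mathcal{R}{({(AP_{L})}^{k+1})},T}$; the identity $P_{S}X=X$ follows from $\mathcal{R}(X)=S$. The subtle point throughout will be correctly tracking the range/null-space bookkeeping for oblique projectors and the identities $AS=\mathcal{R}{({(AP_{L})}^{k+1})}$ and ${(A^{*}T^{\perp})}^{\perp}=\mathcal{N}{({(P_{L}A)}^{k+1})}$, both of which are already established in the proof of Theorem~\ref{T2}$(d)$ and can be invoked directly.

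Finally, for $(d)\Rightarrow(a)$, I would use $\mathcal{N}{(X)}=T$ and $AX=P_{\mathcal{R}{({(AP_{L})}^{k+1})},T}$ to recover $XAX=X$ (postmultiplying the projector identity by $X$ and using that $X$'s range lies in the range of the projector, which is guaranteed by $P_{S}X=X$ together with $AS=\mathcal{R}{({(AP_{L})}^{k+1})}$). Combined with $\mathcal{N}{(X)}=T$ and the range condition $\mathcal{R}{(X)}=S$ extracted from $P_{S}X=X$ and the rank count, this yields $X=A_{S,T}^{(2)}$, and Theorem~\ref{T2}$(f)$ identifies $A_{S,T}^{(2)}=A_{(L)}^{(D)}$, closing the cycle.
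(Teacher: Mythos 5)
Your proposal is correct and follows essentially the same route as the paper: the cycle $(a)\Rightarrow(b)\Rightarrow(c)\Rightarrow(d)\Rightarrow(a)$, using Theorem~\ref{T2}$(b)$,$(d)$ for the first step, postmultiplication by $P_L$ with Theorem~\ref{T2}$(e)$ for the second, the derivation of $\mathcal{R}(X)=S$ and $XAX=XAP_LX=P_{S,T}X=X$ for the third, and Theorem~\ref{T2}$(f)$ to close the cycle. The only blemish is a left/right slip in two justifications (you invoke $XP_L=X$ where the insertion $XAX=XAP_LX$ actually needs $P_LX=X$, and in $(d)\Rightarrow(a)$ the identity $XAX=X$ comes from $XP_{\mathcal{R}((AP_L)^{k+1}),T}=X$ via $\mathcal{N}(X)=T$, not from a range condition), but since you have $\mathcal{R}(X)=S\leq L$ and $\mathcal{N}(X)=T$ in hand at those points, the argument goes through as in the paper.
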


\begin{proof}
  ${(a)}\Rightarrow{(b)}$. It is obvious by ${(b)}$ and ${(d)}$ of Theorem $\ref{T2}$ .

  \noindent ${(b)}\Rightarrow{(c)}$. By ${(d)}$ and ${(e)}$ in Theorem $\ref{T2}$, it follows that $XAP_{L} =P_{S,\mathcal{N}{({(P_{L}A)}^{k+1})}} P_{L}= P_{S,T}$.
  
  \noindent ${(c)}\Rightarrow{(d)}$. Notice that $\mathcal{N}{(X)}=T$ and $XA{P_L}=P_{S,T}$,
  we have $\textup{rank}{(X)}=n-\dim{(T)}=\dim{(S)}$ and $\mathcal{R}{(X)}=S$, which means $P_{S}X=P_{L}X=X$. Thus, $XAX = XAP_{L}X = P_{S,T}X = X$. From $XAX = X$, it is clear that $AXAX = AX$ and $\mathcal{N}{(AX)}=\mathcal{N}{(X)}=T$. Now, by  $\mathcal{R}{(X)}=T$ we  have that $\mathcal{R}{(AX)}=A\mathcal{R}{(X)}=AS=\mathcal{R}{({(AP_{L})}^{k+1})}$.
  Hence, $AX=P_{\mathcal{R}{({(AP_{L})}^{k+1})},T}$.
  
  \noindent ${(d)}\Rightarrow{(a)}$. Since  $\mathcal{N}(X)=T$ and $AX=P_{\mathcal{R}{({(AP_{L})}^{k+1})},T}$, it follows that $XAX=X$. From $\textup{rank}{(X)}=\dim(S)$ and $P_{S}X=X$,
  we have $\mathcal{R}{(X)}=S$. Now by Theorem $\ref{T2}$ ${(f)}$, we get $X=A_{(L)}^{(D)}$.
  \end{proof}

\noindent By Theorem {\ref{T2}} $(c)$, we observe that $A_{(L)}^{(D)}$ is the $\{2\}$-inverse of $A$.
As a consequence, the following results can be obtained.
\begin{theorem}\label{T5}
  Let $A \in \mathbb{C}^{n\times n}$, $L \leq  \mathbb{C}^n$, $k=\textup{Ind}{(AP_{L}+P_{L^{\perp}})}$, $S=\mathcal{R}{({(P_{L}AP_{L})}^{k})}$, $T=\mathcal{N}{({(P_{L}AP_{L})}^{k})}$ and let $X \in \mathbb{C}^{n\times n}$. The following statements are equivalent:
  \\$(a)$ $X=A_{(L)}^{(D)}$;
  \\$(b)$ $XAX=X$, $XP_{T^{\perp}}=X$ and $XA=P_{S,\mathcal{N}{({(P_{L}A)}^{k+1})}}$;
  \\$(c)$ $XAX=X$, $XAP_{L}=P_{S,T}$ and $AX=P_{\mathcal{R}{({(AP_{L})}^{k+1})},T}$; 
  \\$(d)$ $XAX=X$, $P_{S}X=X$ and $AX=P_{\mathcal{R}{({(AP_{L})}^{k+1})},T}$;
  \\$(e)$ $XAX=X$, $P_{L}AX=P_{S,T}$ and $XA=P_{S,\mathcal{N}{({(P_{L}A)}^{k+1})}}$;
  \\$(f)$ $XAX=X$, $P_{S}XP_{T^{\perp}}=X$ and $\textup{rank}(X)=\dim(S)$.
\end{theorem}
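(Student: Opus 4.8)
The plan is to lean on the single unifying observation, already flagged before the statement, that \emph{each} of the conditions $(b)$–$(f)$ contains the equation $XAX=X$. This makes $X$ a $\{2\}$-inverse of $A$, and such a matrix is completely determined by its range and null space, namely $X=A^{(2)}_{\mathcal{R}(X),\mathcal{N}(X)}$. Since Theorem \ref{T2}$(f)$ identifies $A_{(L)}^{(D)}=A_{S,T}^{(2)}$, proving any implication $(b),(c),(d),(e),(f)\Rightarrow(a)$ reduces to deducing, from the two accompanying equations, that $\mathcal{R}(X)=S$ and $\mathcal{N}(X)=T$; then $XAX=X$ closes the case through Theorem \ref{T2}$(f)$. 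I would therefore organize the argument as a star centred on $(a)$: first $(a)$ implies each of the others, and then each of the others implies $(a)$.

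The forward direction $(a)\Rightarrow(b),(c),(d),(e),(f)$ is read straight off Theorem \ref{T2}. Part $(c)$ supplies $XAX=X$; parts $(d)$ and $(e)$ supply the oblique-projector identities $XA=P_{S,\mathcal{N}((P_{L}A)^{k+1})}$, $AX=P_{\mathcal{R}((AP_{L})^{k+1}),T}$, $XAP_{L}=P_{S,T}$ and $P_{L}AX=P_{S,T}$; and part $(b)$, which gives $\mathcal{R}(A_{(L)}^{(D)})=S$ and $\mathcal{N}(A_{(L)}^{(D)})=T$, immediately yields $P_{S}X=X$, $XP_{T^{\perp}}=X$, $P_{S}XP_{T^{\perp}}=X$ and $\textup{rank}(X)=\dim(S)$. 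So the forward half is essentially a lookup.

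The engine for the backward implications is a short range/null-space computation resting on the complementarity $S\oplus T=\mathbb{C}^{n}$, which holds because $k=\textup{Ind}(P_{L}AP_{L})$ by Theorem \ref{T1}$(d)$, so $\mathcal{R}((P_{L}AP_{L})^{k})$ and $\mathcal{N}((P_{L}AP_{L})^{k})$ are complementary and $\dim(S)+\dim(T)=n$. From $XAX=X$ one always has the equalities $\mathcal{R}(X)=\mathcal{R}(XAX)\subseteq\mathcal{R}(XA)\subseteq\mathcal{R}(X)$ and $\mathcal{N}(X)=\mathcal{N}(XAX)\supseteq\mathcal{N}(AX)\supseteq\mathcal{N}(X)$; hence a \emph{full} projector hypothesis pins a subspace down for free: $XA=P_{S,\cdot}$ forces $\mathcal{R}(X)=S$ (cases $(b),(e)$), and $AX=P_{\cdot,T}$ forces $\mathcal{N}(X)=T$ (cases $(c),(d)$). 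This exact subspace then fixes $\textup{rank}(X)=\dim(S)$ (equivalently $\dim\mathcal{N}(X)=\dim(T)$), which is precisely the input needed to upgrade a mere inclusion for the other subspace to equality: $XP_{T^{\perp}}=X$ or $P_{S}XP_{T^{\perp}}=X$ gives $XP_{T}=O$, hence $T\subseteq\mathcal{N}(X)$; $P_{S}X=X$ gives $\mathcal{R}(X)\subseteq S$; and $P_{L}AX=P_{S,T}$ or $XAP_{L}=P_{S,T}$ gives $\mathcal{N}(X)\subseteq\mathcal{N}(P_{L}AX)=T$ or $S=\mathcal{R}(XAP_{L})\subseteq\mathcal{R}(X)$. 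Case $(f)$ is the outlier with no full $XA$ or $AX$ equation, so the rank is supplied directly as a hypothesis and both $\mathcal{R}(X)=S$ and $\mathcal{N}(X)=T$ come from inclusions extracted from $P_{S}XP_{T^{\perp}}=X$ upgraded by that rank count.

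I expect the only real friction to be bookkeeping rather than any conceptual obstacle: one must check, separately for $(b)$ through $(f)$, that in each case at least one of $\{\mathcal{R}(X)=S,\ \mathcal{N}(X)=T\}$ is obtained by a direct inclusion-and-$XAX=X$ argument that does \emph{not} presuppose the rank count, so that the complementarity $\dim(S)+\dim(T)=n$ can then deliver the remaining equality. Getting the order of these two deductions right — which subspace is pinned first and which rides on the resulting rank identity — is what differs from case to case, and is the part I would verify most carefully.
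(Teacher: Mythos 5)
Your proposal is correct and rests on the same two ingredients as the paper's proof: the range/null-space bookkeeping that $XAX=X$ makes available (so a full projector equation for $XA$ or $AX$ pins down $\mathcal{R}(X)$ or $\mathcal{N}(X)$, and the complementarity $S\oplus T=\mathbb{C}^{n}$ upgrades the remaining inclusion), followed by the identification $A_{(L)}^{(D)}=A_{S,T}^{(2)}$ from Theorem \ref{T2}$(f)$. The only difference is organizational --- you prove a star centred on $(a)$ where the paper runs the cycle $(a)\Rightarrow(b)\Rightarrow\cdots\Rightarrow(f)\Rightarrow(a)$ --- but each link of that cycle is exactly the same subspace argument, so this is not a genuinely different route.
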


\begin{proof}
  ${(a)}\Rightarrow{(b)}$.  It is a direct consequence of Theorem $\ref{T2}$ ${(b)}$, ${(c)}$ and ${(d)}$ .

  \noindent  ${(b)}\Rightarrow{(c)}$. From ${(d)}$ and ${(e)}$ from Theorem $\ref{T2}$, we have $XAP_{L}=P_{S,\mathcal{N}{({(P_{L}A)}^{k+1})}}P_{L}=P_{S, T}$. It follows from $XAX=X$ that $AXAX=AX$, $\mathcal{R}{(XA)}=\mathcal{R}{(X)}$ and $\mathcal{N}{(AX)}=\mathcal{N}{(X)}$.
  Since $XA=P_{S,\mathcal{N}{({(P_{L}A)}^{k+1})}}$, we obtain $\mathcal{R}{(X)}=\mathcal{R}{(XA)}=S$, which implies $\mathcal{R}{(AX)} = AS=\mathcal{R}{({(AP_{L})}^{k+1})}$ and $\text{rank}{(X)}=\text{rank}{((P_{L}AP_{L})^{k})}$.
  By $XP_{T^{\perp}}=X$, we get $\mathcal{N}(X)=T$. Thus, $AX=P_{\mathcal{R}{({(AP_{L})}^{k+1})},T}$.
  
  \noindent   ${(c)}\Rightarrow{(d)}$. It follows from $XAX =X$ and $AX=P_{\mathcal{R}{({(AP_{L})}^{k+1})},T}$ that $\mathcal{N}{(X)}=\mathcal{N}{(AX)}=T$,
  which implies $\textup{rank}{(X)} = n-\dim{(T)}=\dim{(S)}$. By $XAP_{L} = P_{S,T}$, we conclude that
  $S\leq\mathcal{R}{(X)}$. Hence, $\mathcal{R}{(X)}=S$, which implies $P_{S}X = X$.
  
  \noindent  ${(d)}\Rightarrow{(e)}$. The proof is similar to the proof of ${(b)}\Rightarrow{(c)}$.
  
  \noindent  ${(e)}\Rightarrow{(f)}$. From $XAX=X$ and $XA=P_{S,\mathcal{N}{({(P_{L}A)}^{k+1})}}$, we have $\mathcal{R}{(X)}=S$, which gives
  $P_{S} X=X$ and $\textup{rank}{(X)}=\dim(S)$. By $P_{L}AX=P_{S,T}$ and $\mathcal{R}{(X)}=S$ it follows that $\mathcal{N}{(X)}=T$, which implies $XP_{T^{\perp}}=X$. So, $P_{S}XP_{T^{\perp}}=XP_{T^{\perp}}=X$.
  
  \noindent  ${(f)}\Rightarrow{(a)}$. By $P_{S}XP_{T^{\perp}}=X$ and $\textup{rank}{(X)}=\dim{(S)}$, we can derive that $\mathcal{R}{(X)}={S}$ and  $\mathcal{N}{(X)}=T$.
  According to Theorem $\ref{T2}$ ${(f)}$, we obtain $X=A_{(L)}^{(D)}$.\\
  \end{proof}

\noindent From Theorem {\ref{T2}} $(d)$, we observe that
\[X=A_{(L)}^{(D)}\Rightarrow AX=P_{\mathcal{R}{({(AP_{L})}^{k+1})},T}~\text{and}~XA=P_{S,\mathcal{N}{({(P_{L}A)}^{k+1})}}.\]
However, the equations $AX=P_{\mathcal{R}{({(AP_{L})}^{k+1})},T}~\text{and}~XA=P_{S,\mathcal{N}{({(P_{L}A)}^{k+1})}}$
are not enough for $X=A_{(L)}^{(D)}$, and this can be confirmed by means
of the following example.
\begin{example}\label{ex1}
  We consider  \[A=\begin{bmatrix}
    3 & 3 & 3 & 2 \\
    1 & 2 & 2 & 3 \\
    2 & 1 & 1 & 3 \\
    0 & 0 & 0 & 0 \\
      \end{bmatrix},~
    L=\mathcal{R}\left(\begin{array}{lll}\begin{bmatrix}
      1 & 0 & 0 \\
      0 & 1 & 0 \\
      0 & 0 & 1 \\
      0 & 0 & 0\end{bmatrix}
      \end{array}\right),~
    X=\begin{bmatrix}
      \frac{1}{12} & \frac{1}{12} & \frac{1}{12} & 0 \\
      \frac{1}{24} & \frac{1}{24} & \frac{1}{24} & 1 \\
      \frac{1}{24} & \frac{1}{24} & \frac{1}{24} & -1 \\
      0 & 0 & 0 & 0 \\
      \end{bmatrix}.\]
  It is easy to check that $\textup{rank}{(P_{L}AP_{L})}=\textup{Ind}{(P_{L}AP_{L})}=2$.
  By computations, we have
\[A_{(L)}^{(D)}=\begin{bmatrix}\frac{1}{12} & \frac{1}{12} & \frac{1}{12} & 0 \\
  \frac{1}{24} & \frac{1}{24} & \frac{1}{24} & 0 \\
  \frac{1}{24} & \frac{1}{24} & \frac{1}{24} & 0 \\
  0 & 0 & 0 & 0 \\
\end{bmatrix},~
AX=\begin{bmatrix}
  \frac{1}{2} & \frac{1}{2} & \frac{1}{2} & 0 \\
  \frac{1}{4} & \frac{1}{4} & \frac{1}{4} & 0 \\
  \frac{1}{4} & \frac{1}{4} & \frac{1}{4} & 0 \\
  0 & 0 & 0 & 0 \\
  \end{bmatrix},~
 XA=\begin{bmatrix}
  \frac{1}{2} & \frac{1}{2} & \frac{1}{2} & \frac{2}{3} \\
  \frac{1}{4} & \frac{1}{4} & \frac{1}{4} & \frac{1}{3} \\
  \frac{1}{4} & \frac{1}{4} & \frac{1}{4} & \frac{1}{3} \\
  0 & 0 & 0 & 0 \\
  \end{bmatrix}.\]
  We can immediately verified that $AX=P_{\mathcal{R}{({(AP_{L})}^{k+1})},T}$ and $XA=P_{S,\mathcal{N}{({(P_{L}A)}^{k+1})}}$,
  but $X\neq A_{(L)}^{(D)}$.
\end{example}

\noindent The next theorem offers a characterization of BDD-inverse by using the
$AX=P_{\mathcal{R}{({(AP_{L})}^{k+1})},T}$, $XA=P_{S,\mathcal{N}{({(P_{L}A)}^{k+1})}}$.

\begin{theorem}\label{T7}
  Let $A \in \mathbb{C}^{n\times n}$, $L \leq  \mathbb{C}^n$, $k=\textup{Ind}{(AP_{L}+P_{L^{\perp}})}$, $S=\mathcal{R}{({(P_{L}AP_{L})}^{k})}$, $T=\mathcal{N}{({(P_{L}AP_{L})}^{k})}$ and let $X \in \mathbb{C}^{n\times n}$. The following statements are equivalent:
\\$(a)$ $X=A_{(L)}^{(D)}$;
\\$(b)$ $AX=P_{\mathcal{R}{({(AP_{L})}^{k+1})},T}$, $XA=P_{S,\mathcal{N}{({(P_{L}A)}^{k+1})}}$ and $XAX=X$;
\\$(c)$ $AX=P_{\mathcal{R}{({(AP_{L})}^{k+1})},T}$, $XA=P_{S,\mathcal{N}{({(P_{L}A)}^{k+1})}}$ and $\operatorname{rank}(\mathrm{X})=\dim({S})$;
\\$(d)$ $AX=P_{\mathcal{R}{({(AP_{L})}^{k+1})},T}$, $XA=P_{S,\mathcal{N}{({(P_{L}A)}^{k+1})}}$ and $XP_{T^{\perp}}=X$;
\\$(e)$ $AX=P_{\mathcal{R}{({(AP_{L})}^{k+1})},T}$, $XA=P_{S,\mathcal{N}{({(P_{L}A)}^{k+1})}}$ and $P_{S}X=X$;
\\$(f)$ $AX=P_{\mathcal{R}{({(AP_{L})}^{k+1})},T}$, $XA=P_{S,\mathcal{N}{({(P_{L}A)}^{k+1})}}$ and $P_{S}XP_{T^{\perp}}=X$.
\end{theorem}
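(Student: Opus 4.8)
The plan is to run the cyclic chain of implications $(a)\Rightarrow(b)\Rightarrow(c)\Rightarrow(d)\Rightarrow(e)\Rightarrow(f)\Rightarrow(a)$, taking advantage of the fact that all of $(b)$--$(f)$ share the two projector equations $AX=P_{\mathcal{R}{({(AP_{L})}^{k+1})},T}$ and $XA=P_{S,\mathcal{N}{({(P_{L}A)}^{k+1})}}$; only the third condition changes from one statement to the next. The opening implication $(a)\Rightarrow(b)$ is immediate from Theorem~\ref{T2} $(c)$ and $(d)$. The governing observation for everything that follows is that these two projector equations \emph{by themselves} only force the one-sided inclusions $S\leq\mathcal{R}{(X)}$ and $\mathcal{N}{(X)}\leq T$: since $\mathcal{R}{(XA)}=S$ and $\mathcal{R}{(XA)}\leq\mathcal{R}{(X)}$ we get $S\leq\mathcal{R}{(X)}$, and since $\mathcal{N}{(AX)}=T$ and $\mathcal{N}{(X)}\leq\mathcal{N}{(AX)}$ we get $\mathcal{N}{(X)}\leq T$. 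This is exactly why Example~\ref{ex1} shows the two equations are insufficient on their own, and each extra hypothesis in $(b)$--$(f)$ is precisely what promotes one of these inclusions to an equality.

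The second key point is that once both $\mathcal{R}{(X)}=S$ and $\mathcal{N}{(X)}=T$ have been secured, the identity $XAX=X$ comes for free: the projector $XA$ acts as the identity on its range $S=\mathcal{R}{(XA)}$, and $\mathcal{R}{(X)}\leq S$, so $XAX=(XA)X=X$; hence $X=A_{S,T}^{(2)}=A_{(L)}^{(D)}$ by Theorem~\ref{T2} $(f)$. I will also use the core--nilpotent splitting $S\oplus T=\mathbb{C}^{n}$, valid because $k=\textup{Ind}{(P_{L}AP_{L})}$ by Theorem~\ref{T1}, so that $\dim{(S)}+\dim{(T)}=n$ and a single rank count converts $\mathcal{R}{(X)}=S$ into $\mathcal{N}{(X)}=T$ and conversely.

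With these two observations each link of the chain is short. For $(b)\Rightarrow(c)$, $XAX=X$ gives $\mathcal{R}{(XA)}=\mathcal{R}{(X)}$, hence $\mathcal{R}{(X)}=S$ and $\textup{rank}{(X)}=\dim{(S)}$. For $(c)\Rightarrow(d)$, combining $\textup{rank}{(X)}=\dim{(S)}$ with the baseline $S\leq\mathcal{R}{(X)}$ forces $\mathcal{R}{(X)}=S$, and the rank count with $\mathcal{N}{(X)}\leq T$ then forces $\mathcal{N}{(X)}=T$, i.e. $T\leq\mathcal{N}{(X)}$, which reads $XP_{T^{\perp}}=X$. The steps $(d)\Rightarrow(e)$ and $(e)\Rightarrow(f)$ run identically: interpret $XP_{T^{\perp}}=X$ as $T\leq\mathcal{N}{(X)}$ (so $\mathcal{N}{(X)}=T$, then $\mathcal{R}{(X)}=S$ by rank count, i.e. $P_{S}X=X$), and interpret $P_{S}X=X$ as $\mathcal{R}{(X)}\leq S$ (so $\mathcal{R}{(X)}=S$, then $\mathcal{N}{(X)}=T$, i.e. $P_{S}XP_{T^{\perp}}=X$). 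Finally, for $(f)\Rightarrow(a)$, the single equation $P_{S}XP_{T^{\perp}}=X$ yields both $\mathcal{R}{(X)}\leq S$ and, on right-multiplication by $P_{T}$ together with $P_{T^{\perp}}P_{T}=O$, the relation $XP_{T}=O$, i.e. $T\leq\mathcal{N}{(X)}$; with the baseline inclusions this gives $\mathcal{R}{(X)}=S$ and $\mathcal{N}{(X)}=T$, and the automatic outer-inverse identity then delivers $X=A_{(L)}^{(D)}$.

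I expect no serious obstacle here; the calculations are all of bookkeeping type. The only point demanding care is correctly translating each operator identity ($XP_{T^{\perp}}=X$, $P_{S}X=X$, $P_{S}XP_{T^{\perp}}=X$) into the corresponding range/null-space inclusion and keeping straight, at every step, which inclusion is the ``free'' one inherited from the two projector equations and which is freshly supplied by the additional hypothesis.
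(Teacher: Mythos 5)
Your proposal is correct and follows essentially the same route as the paper: the identical cyclic chain $(a)\Rightarrow(b)\Rightarrow\cdots\Rightarrow(f)\Rightarrow(a)$, with each link obtained by the same rank count between $S$ and $T$ and the same appeal to Theorem~\ref{T2}~$(f)$ at the end. Your only (harmless) stylistic difference is isolating the two ``baseline'' inclusions $S\leq\mathcal{R}(X)$ and $\mathcal{N}(X)\leq T$ up front rather than rederiving them at each step.
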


\begin{proof}
  ${(a)}\Rightarrow{(b)}$. This follows  by ${(d)}$ and the first equation of ${(c)}$ in Theorem $\ref{T2}$.
  
  \noindent $(b)\Rightarrow(c)$. From $XA=P_{S,\mathcal{N}{({(P_{L}A)}^{k+1})}}$ and $XAX=X$, we obtain $\mathcal{R}{(X)}=\mathcal{R}{(XA)}=S$.
  Hence, $\textup{rank}{(X)}=\dim({S})$.
  
  \noindent ${(c)}\Rightarrow{(d)}$. By $AX=P_{\mathcal{R}{({(AP_{L})}^{k+1})},T}$ and $\textup{rank}{(X)}=\dim({S})$, it follows that $\mathcal{N}{(X)}=T$, which means
  $XP_{T^{\perp}}=X$.
  
  \noindent ${(d)}\Rightarrow{(e)}$. Since $AX=P_{\mathcal{R}{({(AP_{L})}^{k+1})},T}$ and $XP_{T^{\perp}}=X$, we get $\mathcal{N}{(X)}=T$ and $\textup{rank}{(X)}=\dim{(S)}$. From $XA=P_{S,\mathcal{N}{({(P_{L}A)}^{k+1})}}$, we can infer that $\mathcal{R}{(X)}=S$. Hence $P_{S}X=X$.
  
  \noindent ${(e)}\Rightarrow{(f)}$. It follows from $XA=P_{S,\mathcal{N}{({(P_{L}A)}^{k+1})}}$ and $P_{S}X=X$ that $\mathcal{R}{(X)}=S$. Furthermore, from $AX=P_{\mathcal{R}{({(AP_{L})}^{k+1})},T}$, we can conclude that $\mathcal{N}{(X)}=T$, which gives $XP_{T^{\perp}}=X$. Therefore, we get $P_{S}XP_{T^{\perp}}=XP_{T^{\perp}}=X$.
 
  \noindent ${(f)}\Rightarrow{(a)}$. Using that $XA=P_{S,\mathcal{N}{({(P_{L}A)}^{k+1})}}$ and $P_{S}XP_{T^{\perp}}=X$, we have that $XAX=XAP_{S}XP_{T^{\perp}}=P_{S}XP_{T^{\perp}}=X$.
  Hence, we get $\mathcal{R}{(X)}=\mathcal{R}{(XA)}=S$ and $\mathcal{N}{(X)}=\mathcal{N}{(AX)}$.
  Since $AX=P_{\mathcal{R}{({(AP_{L})}^{k+1})},T}$, we have $\mathcal{N}{(X)}=T$. From $(f)$ of  Theorem $\ref{T2}$, we obtain $X=A_{(L)}^{(D)}$.
  \end{proof}

\noindent Now, we will give certain characterization of the BDG-inverse of $A\in\mathbb{C}^{n\times{n}}$
  \begin{theorem}\label{T6}
    Let $A \in\mathbb{C}^{n\times{n}}$, $L\leq \mathbb{C}^n$, $k=\textup{Ind}{(AP_{L}+P_{L^{\perp}})}$, $S=\mathcal{R}{({(P_{L}AP_{L})}^{k})}$, $T=\mathcal{N}{({(P_{L}AP_{L})}^{k})}$ and let $X \in \mathbb{C}^{n\times n}$. The following statements are equivalent:
    \\$(a)$ $X=A_{(L)}^{(D)}$;
    \\$(b)$ $AX=P_{\mathcal{R}{({(AP_{L})}^{k+1})},T}$ and $P_{S}X=X$;
    \\$(c)$ $XA=P_{S,\mathcal{N}{({(P_{L}A)}^{k+1})}}$ and $XP_{T^{\perp}}=X$;
    \\$(d)$ $P_{L}AX=P_{S,T}$ and $P_{S} X=X$;
    \\$(e)$ $P_{L}AX=P_{S,T}$ and $P_{S}XP_{T^{\perp}}=X$;
    \\$(f)$ $XAP_{L}=P_{S, T}$ and $XP_{T^{\perp}}=X$;
    \\$(g)$ $XAP_{L}=P_{S,T}$ and $P_{S}XP_{T^{\perp}}=X$;
    \end{theorem}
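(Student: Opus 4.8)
The plan is to prove the seven statements equivalent through two cyclic chains that both pass through $(a)$, exploiting the left--right symmetry of the list: the chain $(a)\Rightarrow(b)\Rightarrow(d)\Rightarrow(e)\Rightarrow(a)$ handles the conditions phrased through $AX$ and $P_{L}AX$, while $(a)\Rightarrow(c)\Rightarrow(f)\Rightarrow(g)\Rightarrow(a)$ handles those phrased through $XA$ and $XAP_{L}$; together they yield all equivalences. Before starting I would record three ingredients. First, $S\leq L$ because $(P_{L}AP_{L})^{k}=P_{L}(AP_{L})^{k}$ has its range in $L$, and $L^{\perp}\leq T$ because $v\in L^{\perp}$ gives $(P_{L}AP_{L})v=0$; consequently any $X$ with $\mathcal{R}(X)\leq S$ satisfies $P_{L}X=X$, and any $X$ with $T\leq\mathcal{N}(X)$ satisfies $XP_{L}=X$. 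Second, since $\textup{Ind}(P_{L}AP_{L})=k$ by Theorem \ref{T1}, one has $S\oplus T=\mathbb{C}^{n}$ and $\dim(S)=\textup{rank}((P_{L}AP_{L})^{k})=n-\dim(T)$. Third, the fixed projector identities $P_{L}P_{\mathcal{R}((AP_{L})^{k+1}),T}=P_{S,T}$ and $P_{S,\mathcal{N}((P_{L}A)^{k+1})}P_{L}=P_{S,T}$, read off from Theorem \ref{T2} $(d)$ and $(e)$ applied to $A_{(L)}^{(D)}$ itself (for instance $P_{L}\,AA_{(L)}^{(D)}=P_{S,T}$ with $AA_{(L)}^{(D)}=P_{\mathcal{R}((AP_{L})^{k+1}),T}$).

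With these recorded, the opening and middle implications are routine. The implications $(a)\Rightarrow(b)$ and $(a)\Rightarrow(c)$ are immediate from Theorem \ref{T2} $(b)$ and $(d)$. For $(b)\Rightarrow(d)$ and $(c)\Rightarrow(f)$ I would premultiply (for $(b)\Rightarrow(d)$), resp.\ postmultiply (for $(c)\Rightarrow(f)$), the given projector equation by $P_{L}$ and invoke the two identities above, leaving the second hypothesis unchanged. The steps $(d)\Rightarrow(e)$ and $(f)\Rightarrow(g)$ are dimension counts: combining $\mathcal{R}(P_{L}AX)=S$ (resp.\ $\mathcal{R}(XAP_{L})=S$), which forces $\textup{rank}(X)\geq\dim(S)$, with the inclusion coming from the second hypothesis, which forces $\textup{rank}(X)\leq\dim(S)$, pins down $\textup{rank}(X)=\dim(S)$, whence $\mathcal{R}(X)=S$ and $\mathcal{N}(X)=T$; the target $P_{S}XP_{T^{\perp}}=X$ then follows since $P_{S}X=X$ and $XP_{T^{\perp}}=X$ both hold.

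The crux is the two closing implications $(e)\Rightarrow(a)$ and $(g)\Rightarrow(a)$, where the range/null-space data must be upgraded to the outer-inverse equation $XAX=X$ so that Theorem \ref{T2} $(f)$ (uniqueness of $A_{S,T}^{(2)}$) applies. Consider $(e)$: the hypothesis $P_{S}XP_{T^{\perp}}=X$ yields $\mathcal{R}(X)\leq S$ and $T\leq\mathcal{N}(X)$, and $\mathcal{R}(P_{L}AX)=S$ (from $P_{L}AX=P_{S,T}$) raises these to $\mathcal{R}(X)=S$ and $\mathcal{N}(X)=T$ by the dimension count. Since $L^{\perp}\leq T=\mathcal{N}(X)$ we have $XP_{L}=X$, so that $XAX=(XP_{L})AX=X(P_{L}AX)=XP_{S,T}=X$, the last equality because $\mathcal{N}(P_{S,T})=T=\mathcal{N}(X)$. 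The implication $(g)\Rightarrow(a)$ is the mirror image: from $\mathcal{R}(X)=S\leq L$ one gets $P_{L}X=X$, and $XAX=XA(P_{L}X)=(XAP_{L})X=P_{S,T}X=X$, now using $\mathcal{R}(X)=S=\mathcal{R}(P_{S,T})$. I expect this final extraction of $XAX=X$---in particular the insertion of $P_{L}$ legitimized by the inclusions $S\leq L$ and $L^{\perp}\leq T$---to be the only genuinely non-mechanical point, everything else reducing to dimension counts and the fixed projector identities fixed at the outset.
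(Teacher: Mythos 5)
Your proposal is correct and follows essentially the same route as the paper: in each case one upgrades the hypotheses to $\mathcal{R}(X)=S$ and $\mathcal{N}(X)=T$ by an inclusion-plus-dimension count, extracts $XAX=X$ by inserting $P_{L}$ (justified by $S\leq L$ and $L^{\perp}\leq T$) together with the projector identities from Theorem \ref{T2} $(d)$--$(e)$, and concludes from the uniqueness of $A_{S,T}^{(2)}$ in Theorem \ref{T2} $(f)$. The paper only writes out $(a)\Leftrightarrow(b)$ and declares the rest similar, so your two cyclic chains are simply a fuller (and valid) bookkeeping of the same argument.
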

    
    \begin{proof}
      ${(a)}\Rightarrow{(b)}$. This follows  directly by $(b)$ and $(d)$ of Theorem $\ref{T2}$. 
    
    \noindent ${(b)}\Rightarrow{(a)}$. The conditions $AX=P_{\mathcal{R}{({(AP_{L})}^{k+1})},T}$ and $P_{S}X=X$ imply that $\mathcal{N}{(X)}\leq{T}$ and $\mathcal{R}{(X)}\leq{S}$, respectively. So, we have $\mathcal{R}{(X)}={S}$ and $\mathcal{N}{(X)}={T}$.
    Hence $XAX=XP_{L}AX=XP_{S,T}=X$. By ${(f)}$ of Theorem $\ref{T2}$, we get $X=A_{(L)}^{(D)}$.
    
    \noindent  The rest of the proof follows similarly.
    \end{proof}

\noindent The following theorem gives the characteristics of $A_{(L)}^{(D)}$ via matrix decomposition.
\begin{theorem}
  Let $A \in\mathbb{C}^{n\times{n}}$, $L\leq\mathbb{C}^n$, $k=\textup{Ind}{(AP_{L}+P_{L^{\perp}})}$ and let $X\in\mathbb{C}^{n\times{n}}$. The following statements are equivalent:
  \\$(a)$ $X=A_{(L)}^{(D)}$;
  \\$(b)$ $X^{2}AP_{L}=X$, ${(P_{L}AP_{L})}X=XAP_{L}$ and ${(P_{L}A)}^{k+1}X={(P_{L}A)}^{k}P_{L}$;
  \\$(c)$ $P_{L}AX^{2}=X$, $P_{L}AX=X{(P_{L}AP_{L})}$ and ${(P_{L}A)}^{k+1}X={(P_{L}A)}^{k}P_{L}$;
  \\$(d)$ $X^{2}AP_{L}=P_{L}AX^{2}=X$, $P_{L}AX=XAP_{L}$ and ${(P_{L}A)}^{k+1}X={(P_{L}A)}^{k}P_{L}$.
\end{theorem}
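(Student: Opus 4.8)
The guiding idea is to reduce everything to the ordinary Drazin inverse of $B:=P_{L}AP_{L}$. By Theorem~\ref{T1} we have $\textup{Ind}(B)=k$, and by Theorem~\ref{T2}$(h)$ we have $A_{(L)}^{(D)}=B^{D}$; moreover Theorem~\ref{T2}$(a)$ supplies the absorption identities $A_{(L)}^{(D)}=P_{L}A_{(L)}^{(D)}=A_{(L)}^{(D)}P_{L}$. I also record the elementary identity $(P_{L}A)^{j}P_{L}=(P_{L}AP_{L})^{j}=B^{j}$, obtained by cancelling adjacent copies of $P_{L}$, so that in particular $(P_{L}A)^{k+1}P_{L}=B^{k+1}$ and $(P_{L}A)^{k}P_{L}=B^{k}$. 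Throughout I will use the characterization of the Drazin inverse expressed by $(\ref{dra})$ together with its uniqueness: $X=B^{D}$ if and only if $XBX=X$, $BX=XB$ and $B^{k+1}X=B^{k}$.

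For the forward implications $(a)\Rightarrow(b),(c),(d)$, I would substitute $X=A_{(L)}^{(D)}=B^{D}$ and use the absorption identities to replace each mixed product ($XAP_{L}=XB$, $P_{L}AX=BX$, $(P_{L}A)^{k+1}X=B^{k+1}X$) before invoking the three Drazin relations for $B$; every listed equation then collapses to a routine Drazin identity such as $(B^{D})^{2}B=B^{D}$ or $B^{k+1}B^{D}=B^{k}$.

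The substance lies in the converse implications, which I would treat uniformly. First I extract the one-sided absorptions directly: post-multiplying $X^{2}AP_{L}=X$ by $P_{L}$ yields $XP_{L}=X$, and pre-multiplying $P_{L}AX^{2}=X$ by $P_{L}$ yields $P_{L}X=X$. With one such absorption in hand the mixed products simplify to products with $B$, so the commuting-type equation ($(P_{L}AP_{L})X=XAP_{L}$ in $(b)$, $P_{L}AX=X(P_{L}AP_{L})$ in $(c)$, $P_{L}AX=XAP_{L}$ in $(d)$) becomes $BX=XB$, and the quadratic equation becomes, after using this commutativity, the outer-inverse relation $XBX=X$. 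Once the appropriate absorption is available the third equation turns into $B^{k+1}X=B^{k}$, and the Drazin characterization then forces $X=B^{D}=A_{(L)}^{(D)}$.

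The step I expect to be the genuine obstacle is obtaining the second, not-immediately-visible absorption in $(b)$ and $(c)$: in $(b)$ only $XP_{L}=X$ falls out for free, and in $(c)$ only $P_{L}X=X$. Here I would argue by range/null-space inclusion: from $XBX=X$ together with $BX=XB$ one gets $\mathcal{R}(X)=\mathcal{R}(XB)=\mathcal{R}(BX)\subseteq\mathcal{R}(B)\subseteq L$, hence $P_{L}X=X$ (the argument for $(c)$ is dual, using $\mathcal{N}(X)=\mathcal{N}(XB)\supseteq\mathcal{N}(B)\supseteq L^{\perp}$ to get $XP_{L}=X$). Case $(d)$ is the easiest, since $X^{2}AP_{L}=X$ and $P_{L}AX^{2}=X$ are both present and give both absorptions at once. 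As a safer, purely computational alternative that sidesteps these inclusion arguments, I would instead write $P_{L}$, $A$ and $X$ in the unitary block form $(\ref{E2})$--$(\ref{E3})$; the quadratic and commuting equations then force the off-diagonal and $(2,2)$ blocks of $X$ to vanish, collapsing each system to the three scalar-block equations $X_{1}^{2}A_{L}=X_{1}$ (or $A_{L}X_{1}^{2}=X_{1}$), $A_{L}X_{1}=X_{1}A_{L}$ and $A_{L}^{k+1}X_{1}=A_{L}^{k}$, which are exactly the defining relations of $X_{1}=A_{L}^{D}$, whence $X=A_{(L)}^{(D)}$ by $(\ref{E4})$.
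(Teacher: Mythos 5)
Your proposal is correct, but your primary route is not the one the paper takes. The paper proves only $(b)\Rightarrow(a)$ in detail (declaring the other converses similar), and it does so entirely inside the unitary block form \eqref{E2}--\eqref{E3}: it partitions $X$ as in \eqref{X}, reads off $X_{12}=X_{22}=O$ and $X_{11}^{2}A_{L}=X_{11}$, $X_{21}X_{11}A_{L}=X_{21}$ from $X^{2}AP_{L}=X$, kills $X_{21}$ via $X_{21}=X_{21}X_{11}A_{L}=X_{21}A_{L}X_{11}=O$ using the commutation relation, and then recognizes the surviving block equations $X_{11}^{2}A_{L}=X_{11}$, $A_{L}X_{11}=X_{11}A_{L}$, $A_{L}^{k+1}X_{11}=A_{L}^{k}$ as the Drazin system for $A_{L}$, so $X_{11}=A_{L}^{D}$ and $X=A_{(L)}^{(D)}$ by \eqref{E4}. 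This is exactly the ``safer, purely computational alternative'' you sketch at the end. Your main argument instead works globally with $B=P_{L}AP_{L}$: you extract one absorption identity by multiplying the quadratic equation by $P_{L}$, convert the hypotheses into $XBX=X$, $BX=XB$, $B^{k+1}X=B^{k}$, and invoke uniqueness of the Drazin inverse together with Theorem~\ref{T2}$(h)$. The delicate point you flag --- obtaining the second absorption in cases $(b)$ and $(c)$ --- is handled correctly and without circularity by the inclusion $\mathcal{R}(X)=\mathcal{R}(XBX)\subseteq\mathcal{R}(XB)=\mathcal{R}(BX)\subseteq\mathcal{R}(B)\subseteq L$ (and its null-space dual), and this is precisely the step the paper's block computation replaces by the identity $X_{21}=X_{21}A_{L}X_{11}=O$. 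Your global argument is coordinate-free, avoids choosing $U$, and makes transparent that only $k\geq\textup{Ind}(P_{L}AP_{L})$ is needed; the paper's block argument is more mechanical and uniform across all four items. Both are valid.
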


\begin{proof}
Let $P_{L}$, $A$ and $A_{(L)}^{(D)}$ are given by (\ref{E2}), (\ref{E3}) and (\ref{E4}), respectively
By simple calculation,  ${(b)}-{(d)}$ can be derived from ${(a)}$.
\par ${(b)}\Rightarrow{(a)}$.
  Suppose that $X$ be partitioned as
\begin{equation}\label{X}
  X= U \begin{bmatrix} 
    X_{11} & X_{12} \\
    X_{21}& X_{22}
    \end{bmatrix} U^*,
\end{equation}
where $X_{11}\in\mathbb{C}^{l\times{l}}$, $X_{12}\in\mathbb{C}^{l\times{(n-l)}}$, $X_{21}\in\mathbb{C}^{{(n-l)}\times{l}}$ and $X_{22}\in\mathbb{C}^{{(n-l)}\times{(n-l)}}$.
Let $P_{L}$ and $A$ are given by (\ref{E2}) and (\ref{E3}), respectively.
From $X^{2}AP_{L}=X$, we obtain ${X_{11}}^{2}A_{L}=X_{11}$, $X_{21}X_{11}A_{L}=X_{21}$ and $X_{12}=X_{22}=O$.
In this case, by ${(P_{L}AP_{L})}X=XAP_{L}$ we infer to $A_{L}X_{11}=X_{11}A_{L}$ and $X_{21}A_{L}=O$, it follows that $X_{21}=X_{21}X_{11}A_{L}=X_{21}A_{L}X_{11}=O$.
In terms of  ${(P_{L}A)}^{k+1}X={(P_{L}A)}^{k}P_{L}$, we get ${A_{L}}^{k+1}X_{11}={A_{L}}^{k}$.
Through (\ref{dra}) and (\ref{E4}), we can verify that $X=A_{(L)}^{(D)}$.\\
\noindent The rest of the proof follows similarly.
\end{proof}
\section{ Some representations of the Bott-Duffin drazin inverse}
As we all know, for a invertible matrix $A\in\mathbb{C}^{n\times{n}}$, the inverse matrix $A^{-1}$ of $A$ is the unique matrix $X\in\mathbb{C}^{n\times{n}}$, which satisfies 
\begin{equation*}\left.\left.\operatorname{rank}\left(\left[\begin{array}{cc}A&I_n\\I_n&X\end{array}\right.\right.\right]\right)=\operatorname{rank}\left(A\right).\end{equation*}

\noindent In this section, we generalize this fact to singular matrices $A$ to obtain similar results for the BDD-inverse $A_{(L)}^{(D)}$ of $A\in\mathbb{C}^{n\times{n}}$.

\begin{theorem}\label{repre}
Let $A\in\mathbb{C}^{n\times{n}}$, $L\leq\mathbb{C}^{n}$, $\textup{rank}{({(P_{L}AP_{L})}^{k})}=r$ and $\textup{Ind}{(P_{L}AP_{L})}=k$.
Also, suppose that $S=\mathcal{R}{((P_{L}AP_{L})^{k})}$, $T=\mathcal{N}{((P_{L}AP_{L})^{k})}$.
Then there are a unique matrix $W_{1}\in\mathbb{C}^{n\times{n}}$ satisfying
\begin{equation}\label{WY1}
  {W_{1}}^{2}=W_{1}, \quad W_{1}{(P_{L}A)}^{k+1}=O, \quad {(P_{L}A)}^{k+1}W_{1}=O, \quad \operatorname{rank}(W_{1})=n-r
\end{equation}
a unique matrix $W_{2}\in\mathbb{C}^{n\times{n}}$ satisfying
\begin{equation}\label{WPA}
  {W_{2}}^{2}=W_{2}, \quad W_{2}{(AP_{L})}^{k+1}=O, \quad {(AP_{L})}^{k+1}W_{2}=O, \quad \operatorname{rank}(W_{2})=n-r
\end{equation}
and a unique matrix $X\in\mathbb{C}^{n\times{n}}$ satisfying
\begin{equation}\label{WY2}
\operatorname{rank}\left(\left[\begin{array}{cc}
A & I_{n}-W_{2} \\
I_{n}-W_{1} & X
\end{array}\right]\right)=\operatorname{rank}(A) .
\end{equation}
The matrix X is the BDD-inverse $A_{(L)}^{(D)}$ of A. Moreover, we have
\begin{equation}\label{WY3}
W_{1}=P_{\mathcal{N}{({(P_{L}A)}^{k+1})},S},\quad W_{2}=P_{T,\mathcal{R}{({(AP_{L})}^{k+1})}}.
\end{equation}
\end{theorem}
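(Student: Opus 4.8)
The plan is to treat the three matrices $W_{1}$, $W_{2}$, $X$ in turn, deferring the rank equation $(\ref{WY2})$ to the end, where Lemma \ref{mab} does essentially all the work. The engine is the pair of projector identities from Theorem \ref{T2}$(d)$, namely $A_{(L)}^{(D)}A=P_{S,\mathcal{N}{({(P_{L}A)}^{k+1})}}$ and $AA_{(L)}^{(D)}=P_{\mathcal{R}{({(AP_{L})}^{k+1})},T}$, which in particular guarantee the decompositions $S\oplus\mathcal{N}{({(P_{L}A)}^{k+1})}=\mathbb{C}^{n}$ and $\mathcal{R}{({(AP_{L})}^{k+1})}\oplus T=\mathbb{C}^{n}$. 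I would first record the two auxiliary identities $\mathcal{R}{({(P_{L}A)}^{k+1})}=S$ and $\mathcal{N}{({(AP_{L})}^{k+1})}=T$. These come from the elementary relations ${(P_{L}A)}^{k+1}={(P_{L}AP_{L})}^{k}P_{L}A$, ${(P_{L}AP_{L})}^{k+1}={(P_{L}A)}^{k+1}P_{L}$, ${(AP_{L})}^{k+1}=A{(P_{L}AP_{L})}^{k}$ and ${(P_{L}AP_{L})}^{k+1}=P_{L}{(AP_{L})}^{k+1}$, combined with $\textup{Ind}{(P_{L}AP_{L})}=k$, which gives $\mathcal{R}{({(P_{L}AP_{L})}^{k+1})}=S$ and $\mathcal{N}{({(P_{L}AP_{L})}^{k+1})}=T$. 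In particular $\dim\mathcal{R}{({(P_{L}A)}^{k+1})}=\dim\mathcal{R}{({(AP_{L})}^{k+1})}=r$.

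For $W_{1}$ I claim the unique solution of $(\ref{WY1})$ is $W_{1}=I_{n}-A_{(L)}^{(D)}A=P_{\mathcal{N}{({(P_{L}A)}^{k+1})},S}$, and likewise $W_{2}=I_{n}-AA_{(L)}^{(D)}=P_{T,\mathcal{R}{({(AP_{L})}^{k+1})}}$ for $(\ref{WPA})$, which is exactly $(\ref{WY3})$. That this $W_{1}$ satisfies $(\ref{WY1})$ is immediate: it is idempotent as the complement of a projector, the equalities $\mathcal{R}(W_{1})=\mathcal{N}{({(P_{L}A)}^{k+1})}$ and $\mathcal{N}(W_{1})=S=\mathcal{R}{({(P_{L}A)}^{k+1})}$ give the two annihilation conditions, and its rank is $\dim\mathcal{N}{({(P_{L}A)}^{k+1})}=n-r$. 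For uniqueness, any $W_{1}$ obeying $(\ref{WY1})$ satisfies $\mathcal{R}(W_{1})\subseteq\mathcal{N}{({(P_{L}A)}^{k+1})}$ (from ${(P_{L}A)}^{k+1}W_{1}=O$) and $\mathcal{R}{({(P_{L}A)}^{k+1})}\subseteq\mathcal{N}(W_{1})$ (from $W_{1}{(P_{L}A)}^{k+1}=O$); the rank constraint $\textup{rank}(W_{1})=n-r$ forces both inclusions to be equalities, so $\mathcal{R}(W_{1})=\mathcal{N}{({(P_{L}A)}^{k+1})}$ and $\mathcal{N}(W_{1})=S$, and an idempotent is pinned down by its range and null space. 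The argument for $W_{2}$ is identical with $P_{L}A$ and $AP_{L}$ interchanged.

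For $X$ I would recast the bordered matrix into the form required by Lemma \ref{mab}. By $(\ref{WY3})$ and Theorem \ref{T2}$(d)$ we have $I_{n}-W_{2}=AA_{(L)}^{(D)}$ and $I_{n}-W_{1}=A_{(L)}^{(D)}A$, so with the choice $D=E=A_{(L)}^{(D)}$ the matrix in $(\ref{WY2})$ equals $\begin{bmatrix}A & AD\\ EA & X\end{bmatrix}$. Lemma \ref{mab} then gives $\textup{rank}\begin{bmatrix}A & I_{n}-W_{2}\\ I_{n}-W_{1} & X\end{bmatrix}=\textup{rank}(A)+\textup{rank}{(X-EAD)}$, where $EAD=A_{(L)}^{(D)}AA_{(L)}^{(D)}=A_{(L)}^{(D)}$ by Theorem \ref{T2}$(c)$. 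Hence $(\ref{WY2})$ holds precisely when $\textup{rank}{(X-A_{(L)}^{(D)})}=0$, i.e. $X=A_{(L)}^{(D)}$, which at once establishes existence, uniqueness, and the identification of $X$ with the BDD-inverse.

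The main obstacle is the verification of $(\ref{WY3})$: one must locate the range and null spaces of $W_{1}$ and $W_{2}$ sharply enough that the single rank constraint $\textup{rank}=n-r$ upgrades the two inclusions into equalities and thereby forces uniqueness. Everything hinges on the geometric identities $\mathcal{R}{({(P_{L}A)}^{k+1})}=S$ and $\mathcal{N}{({(AP_{L})}^{k+1})}=T$; once these are secured, the reduction of $(\ref{WY2})$ through Lemma \ref{mab} is purely formal and leaves no freedom in $X$.
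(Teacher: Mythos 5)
Your proposal is correct and follows essentially the same route as the paper: identify $W_{1},W_{2}$ as the oblique projectors in \eqref{WY3} by combining idempotency, the annihilation conditions and the rank constraint (using $\mathcal{R}{({(P_{L}A)}^{k+1})}=S$ and $\mathcal{N}{({(AP_{L})}^{k+1})}=T$), then reduce \eqref{WY2} via Lemma~\ref{mab} with $I_{n}-W_{1}=A_{(L)}^{(D)}A$ and $I_{n}-W_{2}=AA_{(L)}^{(D)}$ to $\operatorname{rank}(X-A_{(L)}^{(D)})=0$. The only cosmetic difference is that you obtain the auxiliary range/null-space identities from factorizations such as ${(P_{L}A)}^{k+1}={(P_{L}AP_{L})}^{k}A$, whereas the paper reads them off the unitary block decomposition \eqref{E2}--\eqref{E3}.
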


\begin{proof}
  Suppose that $P_{L}$ and $A$ are expressed by (\ref{E2}) and (\ref{E3}), respectively.
  By $\textup{Ind}{(P_{L}AP_{L})}=k$, we know that
  \[\textup{rank}({(P_{L}A)}^{k+1})= \textup{rank}\left(\left[\begin{array}{cc}
    {A_{L}}^{k+1} & {A_{L}}^{k}B_{L} \\
    O& O\end{array}\right]\right)=\textup{rank}({A_{L}}^{k}),\]
    \[\textup{rank}({(P_{L}AP_{L})}^{k})= \textup{rank}\left(\left[\begin{array}{cc}
      {A_{L}}^{k} & O \\
      O& O\end{array}\right]\right)=\textup{rank}({A_{L}}^{k})\]
which means
$\mathcal{R}{({(P_{L}A)}^{k+1})}=\mathcal{R}{({(P_{L}AP_{L})}^{k})}$.
As a consequence, it can be straightforwardly derived that
\begin{align*}
  \text{the condition (\ref{WY1}) holds}\Longleftrightarrow\quad&{(I_{n}-W_{1})}^{2}=I_{n}-W_{1}, \quad 
  (I_{n}-W_{1}){(P_{L}A)}^{k+1} = {(P_{L}A)}^{k+1},\\
  &{(P_{L}A)}^{k+1}={(P_{L}A)}^{k+1}(I_{n}-W_{1}), \quad \textup{rank}{(I_{n}-W_{1})}=r\\
  \Longleftrightarrow\quad& {(I_{n}-W_{1})}^{2}=I_{n}-W_{1}, \quad \mathcal{R}{(I_{n}-W_{1})}={S},\\
  &\mathcal{N}{(I_{n}-W_{1})}=\mathcal{N}{({(P_{L}A)}^{k+1})}\\
  \Longleftrightarrow\quad& I_{n}-W_{1}=P_{S,\mathcal{N}{({(P_{L}A)}^{k+1})}}\\
  \Longleftrightarrow\quad& W_{1}=P_{\mathcal{N}{({(P_{L}A)}^{k+1})},S}.
\end{align*}
Since Theorem {\ref{T2}} ${(d)}$,  it follows from $\mathcal{N}{(P_{S})}\subset\mathcal{N}{(AP_{S})}\subset\mathcal{N}{(A_{(L)}^{(D)}AP_{S})}=\mathcal{N}{(P_{S})}$
that $\mathcal{N}{(P_{S})}=\mathcal{N}{(AP_{S})}$ and   $\textup{rank}{(AP_{S})}=\textup{rank}{(P_{S})}=r$.
Analogously, it can be verified that the condition (\ref{WPA}) has the unique solution $W_{2}=P_{T,\mathcal{R}{({(AP_{L})}^{k+1})}}$.
From this, it can be seen that (\ref{WY3}) is satisfied.
\par Finally, by Lemma {\ref{mab}}, it follows
from $(\ref{WY3})$ and Theorem $\ref{T2}$ ${(d)}$ that
\begin{align*}\label{WY4}
  \operatorname{rank}\left(\left[\begin{array}{cc}
    A & I_{n}-W_{2} \\
    I_{n}-W_{1} & X
    \end{array}\right]\right)&= \operatorname{rank}\left(\left[\begin{array}{cc}
    A &  AA_{(L)}^{(D)} \\
    A_{(L)}^{(D)}A & X
    \end{array}\right]\right)\\
    &=\textup{rank}{(A)}+\textup{rank}{(X-A_{(L)}^{(D)})}.
  \end{align*}
Thus,
\begin{align*}
\text{the condition (\ref{WY2}) holds}\Longleftrightarrow&\textup{rank}{(X-A_{(L)}^{(D)})}=O\\
\Longleftrightarrow&X=A_{(L)}^{(D)}.
\end{align*}
It follows that $X=A_{(L)}^{(D)}$ is the unique matrix satisfying $(\ref{WY2})$.
\end{proof}

\noindent  In the following result, we present another representation of $A_{(L)}^{(D)}$ by applying Theorem $\ref{repre}$.
We use $A[\alpha|\beta]$ to represent the submatrix composed of the row index set $\alpha$ and the column index set $\beta$ of the matrix $A$.
\begin{theorem}\label{wn1}
  Let $A\in\mathbb{C}^{n\times{n}}$, $L\leq\mathbb{C}^{n\times{n}}$ and $\operatorname{rank}{(A)}=r \geqslant{1}$, and let
  $N=\{1,2,\ldots,n\}$. If  $\alpha,\beta\subseteq{N}$ are such that ${A}[\alpha|\beta]\in\mathbb{C}^{r\times{r}}$  is reversible, then
\begin{equation}\label{co1}
  A_{(L)}^{(D)}=(I_{n}-W_{1})[N|\beta]{(A[\alpha|\beta])}^{-1}(I_{n}-W_{2})[\alpha|N] .
\end{equation}
\end{theorem}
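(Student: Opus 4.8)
The plan is to reduce the determinantal-type formula $(\ref{co1})$ to the single outer-inverse identity $A_{(L)}^{(D)}AA_{(L)}^{(D)}=A_{(L)}^{(D)}$ supplied by Theorem~\ref{T2}~$(c)$. First I would translate the two projector factors into products built from the BDD-inverse. By $(\ref{WY3})$ together with Theorem~\ref{T2}~$(d)$ we have $I_{n}-W_{1}=P_{S,\mathcal{N}{({(P_{L}A)}^{k+1})}}=A_{(L)}^{(D)}A$ and $I_{n}-W_{2}=P_{\mathcal{R}{({(AP_{L})}^{k+1})},T}=AA_{(L)}^{(D)}$, so that $(\ref{co1})$ is precisely the assertion
\[A_{(L)}^{(D)}=(A_{(L)}^{(D)}A)[N|\beta]\,{(A[\alpha|\beta])}^{-1}\,(AA_{(L)}^{(D)})[\alpha|N].\]

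Next I would encode the submatrix operations by selection matrices. For $\gamma\subseteq N$ let $E_{\gamma}$ be the matrix whose columns are the standard unit vectors $e_{j}$, $j\in\gamma$, so that for any $M\in\mathbb{C}^{n\times n}$ one has $M[N|\beta]=ME_{\beta}$, $M[\alpha|N]=E_{\alpha}^{\ast}M$ and $M[\alpha|\beta]=E_{\alpha}^{\ast}ME_{\beta}$. Writing $B=A[\alpha|\beta]=E_{\alpha}^{\ast}AE_{\beta}$, the right-hand side above becomes
\[(A_{(L)}^{(D)}A)E_{\beta}\,B^{-1}\,E_{\alpha}^{\ast}(AA_{(L)}^{(D)})=A_{(L)}^{(D)}\bigl(AE_{\beta}B^{-1}E_{\alpha}^{\ast}A\bigr)A_{(L)}^{(D)}.\]

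The crux is then the classical full-rank-minor identity $AE_{\beta}B^{-1}E_{\alpha}^{\ast}A=A$, equivalently the statement that $E_{\beta}{(A[\alpha|\beta])}^{-1}E_{\alpha}^{\ast}$ is a $\{1\}$-inverse of $A$; this is the one place where the hypothesis $|\alpha|=|\beta|=r=\operatorname{rank}(A)$ is genuinely used and is the main obstacle. I would prove it through a full-rank factorization $A=PQ$ with $P\in\mathbb{C}^{n\times r}$ of full column rank and $Q\in\mathbb{C}^{r\times n}$ of full row rank. Then $B=E_{\alpha}^{\ast}AE_{\beta}=(E_{\alpha}^{\ast}P)(QE_{\beta})$ is an invertible $r\times r$ matrix, which forces both square factors $E_{\alpha}^{\ast}P$ and $QE_{\beta}$ to be invertible; substituting and cancelling yields $AE_{\beta}B^{-1}E_{\alpha}^{\ast}A=P(QE_{\beta})(QE_{\beta})^{-1}(E_{\alpha}^{\ast}P)^{-1}(E_{\alpha}^{\ast}P)Q=PQ=A$.

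Finally, inserting $AE_{\beta}B^{-1}E_{\alpha}^{\ast}A=A$ into the displayed product collapses it to $A_{(L)}^{(D)}AA_{(L)}^{(D)}$, which equals $A_{(L)}^{(D)}$ by Theorem~\ref{T2}~$(c)$, establishing $(\ref{co1})$. Only the full-rank-minor identity is substantive; the remaining steps are the two projector identifications from Theorem~\ref{repre} and routine bookkeeping with selection matrices. I would also remark that this argument simultaneously shows the right-hand side of $(\ref{co1})$ is independent of which nonsingular $r\times r$ submatrix $A[\alpha|\beta]$ is selected.
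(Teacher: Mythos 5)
Your proof is correct, but it takes a genuinely different route from the paper's. The paper obtains (\ref{co1}) as a corollary of the rank equation (\ref{WY2}) in Theorem \ref{repre}: it forms the bordered matrix $Y_{2}$ with blocks $A[\alpha|\beta]$, $(I_{n}-W_{2})[\alpha|N]$, $(I_{n}-W_{1})[N|\beta]$ and $A_{(L)}^{(D)}$, observes that $Y_{2}$ is a submatrix of the matrix $Y_{1}$ appearing in (\ref{WY2}) so that $r=\textup{rank}{(A[\alpha|\beta])}\leq\textup{rank}{(Y_{2})}\leq\textup{rank}{(Y_{1})}=\textup{rank}{(A)}=r$, and then applies the Schur-complement rank identity to $Y_{2}$ to force $A_{(L)}^{(D)}-(I_{n}-W_{1})[N|\beta]{(A[\alpha|\beta])}^{-1}(I_{n}-W_{2})[\alpha|N]$ to have rank zero. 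You instead verify the identity directly: you use the same projector identifications $I_{n}-W_{1}=A_{(L)}^{(D)}A$ and $I_{n}-W_{2}=AA_{(L)}^{(D)}$ (which the paper also exploits inside the proof of Theorem \ref{repre}), reduce the claim to the classical fact that $E_{\beta}{(A[\alpha|\beta])}^{-1}E_{\alpha}^{\ast}$ is a $\{1\}$-inverse of a rank-$r$ matrix whose $r\times r$ minor $A[\alpha|\beta]$ is nonsingular --- correctly proved via a full-rank factorization $A=PQ$, where invertibility of $B=(E_{\alpha}^{\ast}P)(QE_{\beta})$ forces both square factors to be invertible --- and close with $A_{(L)}^{(D)}AA_{(L)}^{(D)}=A_{(L)}^{(D)}$ from Theorem \ref{T2} $(c)$. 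The paper's argument is shorter given Theorem \ref{repre} and matches the rank-equation theme of the section; yours is more elementary and self-contained (it needs only Theorem \ref{T2} $(c)$, $(d)$ and (\ref{WY3}), not the rank characterization (\ref{WY2})), and it exposes the mechanism: the right-hand side of (\ref{co1}) is a $\{1\}$-inverse of $A$ sandwiched between the projectors $XA$ and $AX$, so the identical computation works for any outer inverse with prescribed range and null space. Your closing remark on independence of the choice of $\alpha,\beta$ is automatic from the statement itself and adds nothing, but it is harmless.
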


\begin{proof}
  Suppose
 \[Y_{1}=\left[\begin{array}{cc}
  A & I_{n}-W_{2} \\
  I_{n}-W_{1} & A_{(L)}^{(D)}
  \end{array}\right]~\text{and}~Y_{2}=\left[\begin{array}{cc}
      A[\alpha|\beta] & (I_{n}-W_{2})[\alpha|N] \\
      (I_{n}-W_{1})[N|\beta] & A_{(L)}^{(D)}
      \end{array}\right].\]
  By Theorem $\ref{repre}$, we know that $\textup{rank}{(Y_{1})}=\textup{rank}{(A)}$. Therefore
\[r=\textup{rank}{(A[\alpha|\beta])}\leq\textup{rank}{(Y_{2})}\leq\textup{rank}{(Y_{1})}=\textup{rank}{(A)}=r,\]
which means
\begin{equation}\label{co2}
\textup{rank}{(Y_{2})}=\textup{rank}{(A[\alpha|\beta])}=r.
\end{equation}
Furthermore, by using basic row and column operations of block matrices, we obtain
\begin{equation}\label{co3}
  \textup{rank}{(Y_{2})}=\textup{rank}{(A[\alpha|\beta])}+\textup{rank}{\left(A_{(L)}^{(D)}-(I_{n}-W_{1})[N|\beta]{(A[\alpha|\beta])}^{-1}(I_{n}-W_{2})[\alpha|N]\right)}.
\end{equation}
Hence, (\ref{co1}) can be obtained from (\ref{co2}) and (\ref{co3}).
\end{proof}

\begin{example}\label{stm}
  Suppose that the matrix
\[A=\begin{bmatrix}
  1 & 1 & 1 & 1 \\
  0 & 1 & 2 & 3 \\
  1 & 1 & 1 & 1 \\
  1 & 1 & 1 & 1 \\
  \end{bmatrix},~
  L=\mathcal{R}\left(\begin{array}{lll}\begin{bmatrix}
    1 & 0 & 0 \\
    0 & 1 & 0 \\
    0 & 0 & 1 \\
    0 & 0 & 0\end{bmatrix}
    \end{array}\right).\]
It is easy to check that $\textup{Ind}{(P_{L}AP_{L})}=2$. Show through calculation that 
\[A_{(L)}^{(D)}=\begin{bmatrix}
  \frac{2}{27} & \frac{1}{9} & \frac{4}{27} & 0 \\
  \frac{2}{27} & \frac{1}{9} & \frac{4}{27} & 0 \\
  \frac{2}{27} & \frac{1}{9} & \frac{4}{27} & 0 \\
  0 & 0 & 0 & 0 \\
  \end{bmatrix},~
  AA_{(L)}^{(D)}=\begin{bmatrix}
    \frac{2}{9} & \frac{1}{3} & \frac{4}{9} & 0 \\
    \frac{2}{9} & \frac{1}{3} & \frac{4}{9} & 0 \\
    \frac{2}{9} & \frac{1}{3} & \frac{4}{9} & 0 \\
    \frac{2}{9} & \frac{1}{3} & \frac{4}{9} & 0 \\
    \end{bmatrix},~
    A_{(L)}^{(D)}A=\begin{bmatrix}
      \frac{2}{9} & \frac{1}{3} & \frac{4}{9} & \frac{5}{9} \\
      \frac{2}{9} & \frac{1}{3} & \frac{4}{9} & \frac{5}{9} \\
      \frac{2}{9} & \frac{1}{3} & \frac{4}{9} & \frac{5}{9} \\
      0 & 0 & 0 & 0 \\
      \end{bmatrix}.\]
Taking $\alpha=\{1,2\}$, $\beta=\{1,2\}$ and $N=\{1,2,3,4\}$, then
\[A[\alpha|\beta]=\begin{bmatrix}
  1 & 1    \\
  0 & 1    \\
  \end{bmatrix},~~~
{(I_{n}-W_{1})}[N|\beta]=\begin{bmatrix}
  \frac{2}{9} & \frac{1}{3}  \\
  \frac{2}{9} & \frac{1}{3}  \\
  \frac{2}{9} & \frac{1}{3}  \\
  0 & 0  \\
  \end{bmatrix},~
  {(I_{n}-W_{2})}[\alpha|N]=\begin{bmatrix}
    \frac{2}{9} & \frac{1}{3} & \frac{4}{9} & 0 \\
    \frac{2}{9} & \frac{1}{3} & \frac{4}{9} & 0 \\
    \end{bmatrix}.\]
It is obvious that $\textup{rank}{(A)}=\textup{rank}{(A[\alpha|\beta])}=2$, that is,
$A[\alpha|\beta]$ is reversible.
Hence, by direct calculation, we get
\begin{align*}
    (I_{n}-W_{1})[N|\beta]{(A[\alpha|\beta])}^{-1}(I_{n}-W_{2})[\alpha|N]
&=\begin{bmatrix}
  \frac{2}{27} & \frac{1}{9} & \frac{4}{27} & 0 \\
  \frac{2}{27} & \frac{1}{9} & \frac{4}{27} & 0 \\
  \frac{2}{27} & \frac{1}{9} & \frac{4}{27} & 0 \\
  0 & 0 & 0 & 0 \\
  \end{bmatrix}\\
  &=A_{(L)}^{(D)}.
\end{align*}    
\end{example}

\noindent Below, we use matrix decomposition to give some other representations of $A_{(L)}^{(D)}$.

\begin{theorem}\label{ww}
Let $A\in\mathbb{C}^{n\times{n}}$, $L\leq \mathbb{C}^{n}$. Then
\begin{align*}
A_{(L)}^{(D)}&=P_{L}{(I_{n}-W_{2})}{(AP_{L})}^{D}\\
&={(P_{L}A)}^{D}{(I_{n}-W_{1})}P_{L}\\
&={(I_{n}-W_{1})}A^{\dagger}{(I_{n}-W_{2})},
\end{align*}
where $W_{1},~W_{2}$ are defined as in (\ref{WY3}).
\end{theorem}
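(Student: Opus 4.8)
The plan is to reduce everything to the unitary block form $(\ref{E2})$, $(\ref{E3})$, compute the three claimed expressions block-by-block, and check they all collapse to the matrix $U\,\mathrm{diag}({A_L}^D, O)\,U^*$, which is $A_{(L)}^{(D)}$ by Lemma $\ref{ler}$. The key inputs are the explicit projectors $(\ref{WY3})$, namely $W_1 = P_{\mathcal{N}{((P_LA)^{k+1})},S}$ and $W_2 = P_{T,\mathcal{R}{((AP_L)^{k+1})}}$, together with the factorizations of the Drazin inverses $(AP_L)^D$ and $(P_LA)^D$ that were already carried out inside the proof of Theorem $\ref{T2}$ $(h)$ via Lemma $\ref{aco}$.

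First I would record the block forms of $I_n - W_1$ and $I_n - W_2$. From the equivalence chain in the proof of Theorem $\ref{repre}$ we have $I_n - W_1 = P_{S,\mathcal{N}{((P_LA)^{k+1})}} = A_{(L)}^{(D)}A$ and $I_n - W_2 = P_{\mathcal{R}{((AP_L)^{k+1})},T} = AA_{(L)}^{(D)}$, by Theorem $\ref{T2}$ $(d)$. In the block picture these are computable once one has $A_{(L)}^{(D)}$ from $(\ref{E4})$; in particular $P_L(I_n-W_2) = P_L A A_{(L)}^{(D)} = P_{S,T}$ by Theorem $\ref{T2}$ $(e)$, and similarly $(I_n-W_1)P_L = A_{(L)}^{(D)}A P_L = P_{S,T}$.

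For the first identity I would write $P_L(I_n-W_2)(AP_L)^D = P_{S,T}(AP_L)^D$, then use the block form of $(AP_L)^D$ from Lemma $\ref{aco}$ (namely $(AP_L)^D = U\,\mathrm{diag}({A_L}^D, O)\cdots$ in the lower-triangular $\begin{bmatrix}A_L & O\\ C_L & O\end{bmatrix}$ shape) and multiply out; the $(1,1)$ block gives ${A_L}^D$ and all others vanish, matching $(\ref{E4})$. The second identity is handled symmetrically using $(P_LA)^D P_L$ and $(I_n-W_1)P_L = P_{S,T}$, exactly mirroring the two displayed computations in the proof of Theorem $\ref{T2}$ $(h)$. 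For the third identity I would compute $A^\dagger$ in the block form $(\ref{E3})$ and sandwich it between $I_n-W_1 = A_{(L)}^{(D)}A$ and $I_n-W_2 = AA_{(L)}^{(D)}$; here the cleanest route is to argue conceptually: $(I_n-W_1)A^\dagger(I_n-W_2) = A_{(L)}^{(D)}A A^\dagger A A_{(L)}^{(D)} = A_{(L)}^{(D)}(AA^\dagger A)A_{(L)}^{(D)} = A_{(L)}^{(D)}AA_{(L)}^{(D)} = A_{(L)}^{(D)}$, using the Moore–Penrose equation $AA^\dagger A = A$ and the $\{2\}$-inverse property (the first equality of Theorem $\ref{T2}$ $(c)$).

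The main obstacle is the third identity, because the slick conceptual derivation above hides a subtlety: it requires $I_n - W_1 = A_{(L)}^{(D)}A$ and $I_n - W_2 = AA_{(L)}^{(D)}$ to hold on the nose, not merely up to range/null-space data. I would verify these equalities carefully from $(\ref{WY3})$ and Theorem $\ref{T2}$ $(d)$ — both sides are idempotents with the same prescribed range and null space, hence equal — before invoking them. The first two identities are routine once the block form of the relevant Drazin inverse is substituted, so the bulk of the care goes into pinning down the two projector identities that convert $W_1, W_2$ into the one-sided products $A_{(L)}^{(D)}A$ and $AA_{(L)}^{(D)}$.
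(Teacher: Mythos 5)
Your proposal is correct and follows essentially the same route as the paper: both reduce to the unitary block forms \eqref{E2}--\eqref{E4}, identify $I_{n}-W_{1}=A_{(L)}^{(D)}A$ and $I_{n}-W_{2}=AA_{(L)}^{(D)}$ via Theorem~\ref{T2}~$(d)$ and \eqref{WY3}, use Lemma~\ref{aco} for the block forms of ${(AP_{L})}^{D}$ and ${(P_{L}A)}^{D}$ in the first two identities, and collapse the third to $A_{(L)}^{(D)}AA^{\dagger}AA_{(L)}^{(D)}=A_{(L)}^{(D)}AA_{(L)}^{(D)}=A_{(L)}^{(D)}$. Your extra care in checking that the projector identities hold exactly (equal idempotents with the same range and null space) is exactly what justifies the paper's block expressions for $I_{n}-W_{1}$ and $I_{n}-W_{2}$.
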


\begin{proof}
From ${(d)}$ of Theorem $\ref{T2}$, $(\ref{E2})$, $(\ref{E3})$ and $(\ref{E4})$, we can deduce that
\[I_{n}-W_{1}=U\begin{bmatrix}{A_{L}}^{D}A_{L}&{A_{L}}^{D}B_{L}\\O&O \end{bmatrix}U^*~
\textup{and}~I_{n}-W_{2}=U\begin{bmatrix}A_{L}{A_{L}}^{D}&O\\C_{L}{A_{L}}^{D}&O \end{bmatrix}U^*.\]
By Lemma $\ref{aco}$, we directly get
\begin{align*}
  P_{L}{(I_{n}-W_{2})}{(AP_{L})}^{D}&=U \begin{bmatrix}
  I_l & O \\
  O & O\end{bmatrix}\begin{bmatrix}A_{L}{A_{L}}^{D}&O\\C_{L}{A_{L}}^{D}&O \end{bmatrix}{\begin{bmatrix}{A_{L}}&O\\C_{L}&O\end{bmatrix}}^{D}U^*\\
&=U\begin{bmatrix}{A_{L}A_{L}}^{D}&O\\O&O \end{bmatrix}\begin{bmatrix}
  {A_{L}}^{D} & O \\
  C_{L}({A_{L}}^{D})^{2} & O
  \end{bmatrix}U^*\\
&=U\begin{bmatrix}{A_{L}}^{D}&O\\O&O \end{bmatrix}U^*=A^{(D)}_{(L)}
\end{align*}
and
\begin{align*}
  {(P_{L}A)}^{D}{(I_{n}-W_{1})}P_{L}
  &=U{\begin{bmatrix}{A_{L}}&B_{L}\\O&O\end{bmatrix}}^{D}
  \begin{bmatrix}{A_{L}}^{D}A_{L}&{A_{L}}^{D}B_{L}\\O&O \end{bmatrix}\begin{bmatrix}
    I_l & O \\
    O & O\end{bmatrix}U^*\\
&=U\begin{bmatrix}
  {A_{L}}^{D} & {({A_{L}}^{D})}^{2}B_{L} \\
  O & O
  \end{bmatrix}
\begin{bmatrix}{A_{L}}^{D}A_{L}&O\\O&O \end{bmatrix}U^*\\
&=U\begin{bmatrix}{A_{L}}^{D}&O\\O&O \end{bmatrix}U^*=A^{(D)}_{(L)}.
\end{align*}
Finally,
\begin{align*}
  {(I_{n}-W_{1})}A^{\dagger}{(I_{n}-W_{2})}
  &=U\begin{bmatrix}{A_{L}}^{D}A_{L}&{A_{L}}^{D}B_{L}\\O&O \end{bmatrix}
  {\begin{bmatrix} 
    A_L & B_L \\
    C_L& E_L
    \end{bmatrix}}^{\dagger}\begin{bmatrix}A_{L}{A_{L}}^{D}&O\\C_{L}{A_{L}}^{D}&O \end{bmatrix}U^*\\
&=U\begin{bmatrix}{A_{L}}^{D}&O\\O&O \end{bmatrix}
\begin{bmatrix} 
  A_L & B_L \\
  C_L& E_L
  \end{bmatrix}{\begin{bmatrix} 
    A_L & B_L \\
    C_L& E_L
    \end{bmatrix}}^{\dagger}\begin{bmatrix} 
      A_L & B_L \\
      C_L& E_L
      \end{bmatrix}\begin{bmatrix}{A_{L}}^{D}&O\\O&O \end{bmatrix}U^*\\
&=U\begin{bmatrix}{A_{L}}^{D}&O\\O&O \end{bmatrix}
\begin{bmatrix} 
  A_L & B_L \\
  C_L& E_L
  \end{bmatrix}
\begin{bmatrix}{A_{L}}^{D}&O\\O&O \end{bmatrix}U^*\\
&=U\begin{bmatrix}{A_{L}}^{D}&O\\O&O \end{bmatrix}U^*=A^{(D)}_{(L)}.
\end{align*} 
This completes the proof.
\end{proof}

\noindent In the next theorem, we use $A\mid_{L}$ to indicate the restriction of $A\in\mathbb{C}^{m\times{n}}$ to $L\leq\mathbb{C}^{n}$, that is, 
$A\mid_{L}x=Ax~~(x\in{L})$.
\begin{theorem}\label{TH3A}
  Let $A\in\mathbb{C}^{n\times{n}}$ and $L\leq\mathbb{C}^{n}$ be such that $\textup{Ind}(P_{L}AP_{L})=k$. Then
  \begin{equation}\label{ML}
  A_{(L)}^{(D)}=\tilde{A}^{-1}{(P_{L}AP_{L})}^{k},
  \end{equation}
where $S=\mathcal{R}{({(P_{L}AP_{L})}^{k})}$ and $\tilde{A}={(P_{L}AP_{L})}^{k+1}\mid_S$.
\end{theorem}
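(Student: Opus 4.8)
The plan is to reduce everything to the Drazin inverse of $B:=P_{L}AP_{L}$ and then identify the right-hand side of (\ref{ML}) with $B^{D}$. First I would recall from Theorem~\ref{T2}~$(h)$ that $A_{(L)}^{(D)}={(P_{L}AP_{L})}^{D}=B^{D}$, so that proving (\ref{ML}) is the same as proving $B^{D}=\tilde{A}^{-1}B^{k}$. Since $\textup{Ind}(B)=k$, the core--nilpotent splitting gives $\mathbb{C}^{n}=S\oplus T$ with $S=\mathcal{R}(B^{k})$ and $T=\mathcal{N}(B^{k})$, both $B$-invariant, and with $B\mid_{S}$ a bijection of $S$ onto itself. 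Consequently $\tilde{A}=B^{k+1}\mid_{S}={(B\mid_{S})}^{k+1}$ is invertible on $S$, which is precisely what makes $\tilde{A}^{-1}$, and hence the right-hand side of (\ref{ML}), well defined; I would state this first so that the object under discussion is legitimate.

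Next, to handle the mismatch between $\tilde{A}^{-1}$, which lives only on $S$, and $B^{k}$, which is a genuine $n\times n$ matrix, I would introduce the auxiliary matrix $\hat{A}=B^{k+1}+(I_{n}-BB^{D})$. Using that $BB^{D}$ is the projector $P_{S,T}$, one checks that $\hat{A}$ acts as $B^{k+1}$ on $S$ and as the identity on $T$: indeed $(I_{n}-BB^{D})\mid_{S}=O$ while $B^{k+1}\mid_{T}=O$ and $(I_{n}-BB^{D})\mid_{T}=I$. Since $B^{k+1}\mid_{S}=\tilde{A}$ is invertible, $\hat{A}$ is invertible on all of $\mathbb{C}^{n}$, preserves $S$, and satisfies $\hat{A}^{-1}\mid_{S}=\tilde{A}^{-1}$. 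Because $\mathcal{R}(B^{k})=S$, every column of $B^{k}$ lies in $S$, so $\hat{A}^{-1}B^{k}=\tilde{A}^{-1}B^{k}$. This is the step that converts the restricted inverse into an honest matrix product, and it is the one place where the $S\oplus T$ bookkeeping must be carried out carefully; I expect it to be the main obstacle to a clean write-up.

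Finally, I would finish with a purely algebraic computation using the Drazin identities in (\ref{dra}). From $B^{D}B^{k+1}=B^{k}$ together with $BB^{D}=B^{D}B$ one gets $B^{k+1}B^{D}=B^{k}$, and from $B^{D}BB^{D}=B^{D}$ one gets $BB^{D}B^{D}=B^{D}$, so
\[\hat{A}B^{D}=B^{k+1}B^{D}+B^{D}-BB^{D}B^{D}=B^{k}+B^{D}-B^{D}=B^{k}.\]
Multiplying on the left by $\hat{A}^{-1}$ yields $B^{D}=\hat{A}^{-1}B^{k}=\tilde{A}^{-1}B^{k}$, which combined with $A_{(L)}^{(D)}=B^{D}$ gives (\ref{ML}). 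If one prefers to avoid the auxiliary matrix $\hat{A}$ altogether, the same conclusion can be reached by evaluating both $\tilde{A}^{-1}B^{k}$ and $B^{D}$ on the decomposition $\mathbb{C}^{n}=S\oplus T$: both annihilate $T$ and both act as ${(B\mid_{S})}^{-1}$ on $S$, using $B^{k}\mid_{S}={(B\mid_{S})}^{k}$ and $B^{D}\mid_{S}={(B\mid_{S})}^{-1}$.
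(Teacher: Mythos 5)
Your proof is correct. The overall skeleton matches the paper's: both first establish that $\tilde{A}$ is invertible on $S$ (the paper by an explicit element-chasing argument with $x=(P_{L}AP_{L})^{k}y$, you by the core--nilpotent splitting $\mathbb{C}^{n}=S\oplus T$ with $B\mid_{S}$ bijective --- these are equivalent), and both then reduce the claim to identifying $\tilde{A}^{-1}B^{k}$ with $B^{D}$ for $B=P_{L}AP_{L}$, invoking Theorem~\ref{T2}~$(h)$ at the end. Where you genuinely diverge is in that identification step. The paper sets $X=\tilde{A}^{-1}B^{k}$, decomposes an arbitrary $z=z_{1}+z_{2}$ with $z_{1}\in T$, $z_{2}\in S$, writes $z_{2}=B^{k+1}t$, and verifies pointwise that $X$ satisfies the three Drazin equations in (\ref{dra}), concluding by uniqueness of the Drazin inverse. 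You instead globalize the restricted operator via $\hat{A}=B^{k+1}+(I_{n}-BB^{D})$, observe that $\hat{A}$ is invertible with $\hat{A}^{-1}B^{k}=\tilde{A}^{-1}B^{k}$, and then dispatch the whole identification with the one-line identity $\hat{A}B^{D}=B^{k}$. Your route buys a shorter, purely algebraic argument that avoids checking three separate equations on a vector decomposition, at the modest cost of introducing and justifying the auxiliary matrix $\hat{A}$ (which requires already knowing $BB^{D}=P_{S,T}$); the paper's route is more elementary and needs nothing beyond the defining equations (\ref{dra}). Your closing remark that one may instead evaluate both $\tilde{A}^{-1}B^{k}$ and $B^{D}$ on $S\oplus T$ is in fact the closest in spirit to what the paper actually writes out.
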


\begin{proof}
  The hypothesis $\tilde{A}={(P_{L}AP_{L})}^{k+1}\mid_S$ gives
  $\tilde{A}x=0$ where $x\in{S}$. There exists a $y\in\mathbb{C}^{n}$
such that $x={(P_{L}AP_{L})}^{k}y$. Moreover,
\[{(P_{L}AP_{L})}^{2k+1} y=\tilde{A} x=0.\]
Hence, we have $y\in\mathcal{N}({(P_{L}AP_{L})}^{2 k+1})=T$
which means $x=(P_{L}AP_{L})^{k} y=0$.
On the other hand, for every $y\in{S}$ there exists an $x\in\mathbb{C}^{n}$ such that
$y=(P_{L}AP_{L})^{k+1}((P_{L}AP_{L})^{k} x) \in \tilde{A}S$
since $S=\mathcal{R}({(P_{L}AP_{L})}^{2 k+1})$. 
Therefore $\tilde{A}$ is reversible.
\par We let $X=\tilde{A}^{-1}(P_{L}AP_{L})^{k}$ and consider the decomposition of any
$z\in\mathbb{C}^{n}$ as $z=z_{1}+z_{2}$ with $z_{1}\in T$ and
$z_{2}\in S$. It follows that there exists a
$t\in\mathbb{C}^{n}$ such that $z_{2}=(P_{L}AP_{L})^{k+1}t$ since $S=\mathcal{R}((P_{L}AP_{L})^{k+1})$.
Finally, we can verify that  $X=\tilde{A}^{-1}{(P_{L}AP_{L})}^{k}$ satisfies the three equations in $(\ref{dra})$.
\par The above-mentioned fact is true for the arbitrary $z \in \mathbb{C}^{n}$; thus we have
\[{(P_{L}AP_{L})}^{k+1} X={(P_{L}AP_{L})}^{k}, \quad X {(P_{L}AP_{L})} X=X, \quad  {(P_{L}AP_{L})} X=X {(P_{L}AP_{L})}.\]
It follows that $X={(P_{L}AP_{L})}^{D}=A_{(L)}^{(D)}$.
Now, Theorem {\ref{TH3A}} completes the proof.
\end{proof}

\section{Applications}

First off, we  discuss   applications of the  BDD-inverse in solving the restricted linear equations  in electrical network theory.

\begin{theorem}

Let $A\in\mathbb{C}^{n\times{n}}$, $L\leq\mathbb{C}^{n}$ and $k=\textup{Ind}{(AP_{L}+P_{L^{\perp}})}$. The restricted linear equations
\begin{equation}\label{gaoe1}
Ax+y=\beta ,~~x\in{L},~y\in{L^{\perp}},
\end{equation}
where $\beta\in\mathcal{R}{({(AP_{L}+P_{L^{\perp}})}^{k})}$.\\
\\
${(a)}$. The general solution of \eqref{gaoe1} is
  \begin{align*}
    &x=A_{(L)}^{(D)}\beta+{(P_L{A}P_{L})}^{k-1}(I-A_{(L)}^{(D)}(AP_{L}+P_{L^{\perp}}))u,\\
    &y=(I_{n}-AA_{(L)}^{(D)})\beta-{({A}P_{L})}^{k}(I-A_{(L)}^{(D)}(AP_{L}+P_{L^{\perp}}))u,
  \end{align*}
  where $u\in\mathbb{C}^{n}$ is arbitrary.\\
  ${(b)}$. If $(x+y)\in\mathcal{R}({(AP_{L}+P_{L^{\perp}})}^{k})$, then the unique solution of \eqref{gaoe1} is
  \begin{equation*}
    x= A_{(L)}^{(D)}\beta
    \text{ and } y = (I_{n}-AA_{(L)}^{(D)})\beta.
  \end{equation*}
\end{theorem}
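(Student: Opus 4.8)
The plan is to reduce the restricted system to an unconstrained Drazin equation. Writing $G := AP_{L}+P_{L^{\perp}}$ and $z = x+y$, the constraints $x\in L$, $y\in L^{\perp}$ are exactly $x = P_{L}z$, $y = P_{L^{\perp}}z$, and then $Ax+y = AP_{L}z + P_{L^{\perp}}z = Gz$. Hence \eqref{gaoe1} is equivalent to the single equation $Gz=\beta$. Since $\textup{Ind}(G)=k$ and (as recalled in the proof of Theorem \ref{T2}(h)) $\mathcal{R}(G^{D})=\mathcal{R}(G^{k})$, $\mathcal{N}(G^{D})=\mathcal{N}(G^{k})$, the hypothesis $\beta\in\mathcal{R}(G^{k})$ gives $GG^{D}\beta=\beta$, so $z_{0}=G^{D}\beta$ is a particular solution. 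Projecting, $x_{0}=P_{L}z_{0}=P_{L}G^{D}\beta=A_{(L)}^{(D)}\beta$ by Definition \ref{D1}, and $y_{0}=\beta-Ax_{0}=(I_{n}-AA_{(L)}^{(D)})\beta$.

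For ${(a)}$, the full solution set of $Gz=\beta$ is $z_{0}+\mathcal{N}(G)$, so I must show the stated family realizes exactly this set after the split $x=P_{L}z$, $y=P_{L^{\perp}}z$. I would run the verification in the unitary block coordinates of \eqref{E2}, \eqref{E3}, \eqref{E5}, where $A_{(L)}^{(D)}$ is given by \eqref{E4}. Two algebraic identities drive everything. First, a direct block computation yields $A{(P_{L}AP_{L})}^{k-1}={(AP_{L})}^{k}$, whence $Ax+y=\beta+\left[A{(P_{L}AP_{L})}^{k-1}-{(AP_{L})}^{k}\right](I-A_{(L)}^{(D)}G)u=\beta$ for every $u$, so each member of the family solves \eqref{gaoe1}. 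Second, $A_{L}^{k}A_{L}^{\pi}=O$ (from \eqref{dra}), which makes the free part of $z=x+y$ land in $\mathcal{N}(G)$ and forces the free part of $y$ into $L^{\perp}$. Finally, $\beta\in\mathcal{R}(G^{k})$ places the $L$-block of $\beta$ into $\mathcal{R}(A_{L}^{k})$, so that $A_{L}^{\pi}$ annihilates it; this is precisely what guarantees $A_{(L)}^{(D)}\beta\in L$ and $(I_{n}-AA_{(L)}^{(D)})\beta\in L^{\perp}$ for the particular part.

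The main obstacle is completeness: proving that as $u$ ranges over $\mathbb{C}^{n}$ the offset $\left[{(P_{L}AP_{L})}^{k-1}-{(AP_{L})}^{k}\right](I-A_{(L)}^{(D)}G)u$ sweeps out all of $\mathcal{N}(G)$, rather than merely a subspace of it. Here, using that $A_{(L)}^{(D)}P_{L^{\perp}}=O$ by Theorem \ref{T2}(a) and $A_{(L)}^{(D)}AP_{L}=P_{S,T}$ by Theorem \ref{T2}(e), one has $I-A_{(L)}^{(D)}G=I-P_{S,T}=P_{T,S}$ with $S=\mathcal{R}({(P_{L}AP_{L})}^{k})$ and $T=\mathcal{N}({(P_{L}AP_{L})}^{k})$, so the offset is governed by the action of ${(P_{L}AP_{L})}^{k-1}$ on the nilpotent part $T$. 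I would settle this by a rank count, comparing $\dim\mathcal{N}(G)=n-\textup{rank}(G)$ against the rank of the parametrizing operator and using Theorem \ref{T1} to transfer the index from $G$ to $P_{L}AP_{L}$. This surjectivity onto $\mathcal{N}(G)$ is the delicate step and the one to scrutinize most carefully.

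Part ${(b)}$ is then immediate and independent of the completeness question. If in addition $z=x+y\in\mathcal{R}(G^{k})$, set $v:=z-z_{0}$. Then $v\in\mathcal{N}(G)\subseteq\mathcal{N}(G^{k})$, and also $v\in\mathcal{R}(G^{k})$, being the difference of two elements of $\mathcal{R}(G^{k})$ (note $z_{0}=G^{D}\beta\in\mathcal{R}(G^{D})=\mathcal{R}(G^{k})$). Since $\textup{Ind}(G)=k$ forces $\mathcal{R}(G^{k})\cap\mathcal{N}(G^{k})=\{0\}$, we conclude $v=0$, i.e. $z=G^{D}\beta$, so the unique solution is $x=A_{(L)}^{(D)}\beta$ and $y=(I_{n}-AA_{(L)}^{(D)})\beta$.
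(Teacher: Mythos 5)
Your reduction of \eqref{gaoe1} to the single equation $Gz=\beta$ with $G=AP_{L}+P_{L^{\perp}}$, the identification of the particular solution $z_{0}=G^{D}\beta$, and your entire argument for part ${(b)}$ coincide with the paper's proof; part ${(b)}$ is complete and correct as you give it. For part ${(a)}$, however, the paper does not verify anything itself: it quotes \cite[Lemma 2.4]{wei1998index} for the ``general solution'' of the Drazin-consistent system $Gz=\beta$ and then pushes that formula through $P_{L}$ using the block forms \eqref{E2}, \eqref{E3} and \eqref{E5}. You instead try to prove completeness of the parametrization directly, and you have correctly located the crux --- but that is precisely the step that fails.

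The offset in $z$ produced by your family is $G^{k-1}(I-G^{D}G)u$, and writing $G$ as core plus nilpotent part $N=GG^{\pi}$ one finds
\[
\mathcal{R}\bigl(G^{k-1}(I-G^{D}G)\bigr)=\mathcal{R}(N^{k-1})=\mathcal{N}(G)\cap\mathcal{R}(G^{k-1}),
\]
which is a \emph{proper} subspace of $\mathcal{N}(G)$ whenever $k\ge 2$ and some nilpotent Jordan block of $A_{L}$ has size strictly less than $k$. Concretely, take $n=4$, $L=\operatorname{span}\{e_{1},e_{2},e_{3}\}$ (with $e_{i}$ the standard basis vectors) and any $A$ whose block $A_{L}$ in \eqref{E3} equals $\left[\begin{smallmatrix}0&1&0\\0&0&0\\0&0&0\end{smallmatrix}\right]$. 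Then $k=2$ and $\dim\mathcal{N}(G)=\dim\mathcal{N}(A_{L})=2$, while $\operatorname{rank}\bigl(G^{k-1}(I-G^{D}G)\bigr)=\operatorname{rank}(A_{L})-\operatorname{rank}(A_{L}^{2})=1$; for $\beta=0$ the pair $x=e_{3}$, $y=-Ae_{3}$ solves \eqref{gaoe1} but is not of the stated form, since the $x$-offset $(P_{L}AP_{L})^{k-1}(I-A_{(L)}^{(D)}G)u$ only sweeps $\mathcal{R}(A_{L}^{k-1}A_{L}^{\pi})=\operatorname{span}\{e_{1}\}$. So the rank count you propose comes out unequal, and no argument can close this gap as stated: the displayed family is the general solution of $Gz=\beta$ only under the additional restriction $z\in\mathcal{R}(G^{k-1})$, which is what the cited lemma actually delivers. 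Your write-up must either carry that restriction explicitly (weakening the claim to: every member of the family is a solution, and the family exhausts the solutions lying in $\mathcal{R}(G^{k-1})$) or, as the paper does, lean entirely on the citation.
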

\begin{proof}
{${(a)}$. Suppose that $z=x+y$, where $x=P_{L}z$ and $y=P_{L^{\perp}}z$.
The consistency of {(\ref{gaoe1})} is equivalent to the
consistency of
\begin{equation}\label{APLPlzZeq}
  (AP_{L}+P_{L^{\perp}})z=\beta.
\end{equation}
Since $k=\textup{Ind}{(AP_{L}+P_{L^{\perp}})}$ and $\beta\in{\cal R}(AP_{L}+P_{L^{\perp}})$, by {\cite[Lemma 2.4]{wei1998index}}
we know that {(\ref{APLPlzZeq})} has solutions, and its general solution is
\[z={(AP_{L}+P_{L^{\perp}})}^{D}\beta+{(AP_{L}+P_{L^{\perp}})}^{k-1}{(I-{(AP_{L}+P_{L^{\perp}})}^{D}{(AP_{L}
+P_{L^{\perp}})})}u,\]
 where $u\in\mathbb{C}^{n}$ is arbitrary.
Therefore,  the general solution of   {(\ref{gaoe1})} is
\begin{align*}
x&=P_{L}z=A_{(L)}^{(D)}\beta+{(P_L{A}P_{L})}^{k-1}(I-A_{(L)}^{(D)}(AP_{L}+P_{L^{\perp}}))u,\\
y&=\beta-Ax=(I_{n}-AA_{(L)}^{(D)})\beta-{({A}P_{L})}^{k}(I-A_{(L)}^{(D)}(AP_{L}+P_{L^{\perp}}))u.
\end{align*}}
${(b)}$.  Let $z=x+y\in\mathcal{R}({(AP_{L}+P_{L^{\perp}})}^{k})$ be written as $z={(AP_{L}+P_{L^{\perp}})}^{k}z_{1}$ for some $z_{1}\in \mathbb{C}^{n}$.
Then ${(AP_{L}+P_{L^{\perp}})}^{k+1}z_{1}=\beta$ and
\begin{align*}
z&={(AP_{L}+P_{L^{\perp}})}^{k}z_{1}\\
&={(AP_{L}+P_{L^{\perp}})}^{k+1}{(AP_{L}+P_{L^{\perp}})}^{D}z_{1}\\
&={(AP_{L}+P_{L^{\perp}})}^{D}{(AP_{L}+P_{L^{\perp}})}^{k+1}z_{1}\\
&={(AP_{L}+P_{L^{\perp}})}^{D}\beta.
\end{align*}
Since $\mathcal{R}({(AP_{L}+P_{L^{\perp}})}^{k})\cap\mathcal{N}({(AP_{L}+P_{L^{\perp}})}^{k})=\{0 \} $,
we know that $z={(AP_{L}+P_{L^{\perp}})}^{D}\beta$ is the unique solution of {(\ref{APLPlzZeq})}.
In consequence, the unique solution to {(\ref{gaoe1})} is
\begin{equation*}\label{zhang2}
  x=P_{L}z=A_{(L)}^{(D)}\beta,~y=\beta-Ax=(I_{n}-AA_{(L)}^{(D)})\beta.
\end{equation*}
\end{proof}

\noindent In the following, we consider a  constrained linear equation
  \begin{equation}\label{pax}
P_{L}Ax=b.
  \end{equation} 
where $A\in\mathbb{C}^{n\times{n}}$, $L\leq\mathbb{C}^{n}$, $k=\textup{ind}{(P_{L}AP_{L})}$ and $b\in\mathcal{R}{({(P_{L}AP_{L})}^{k})}$.
From \cite{wei1998index} and Theorem $\ref{T2}$ $(h)$, the general solution of $(\ref{pax})$ is given by
\begin{equation}\label{xb}
x=A_{(L)}^{(D)}b+{(P_{L}AP_{L})}^{k-1}{(I-A_{(L)}^{(D)}AP_{L})}z,~{z}~\text{an arbitrary vector.}
\end{equation}

\begin{theorem}\label{axb}
Let $A\in\mathbb{C}^{n\times{n}}$, $L\leq\mathbb{C}^{n}$. Also, suppose that $k=\textup{ind}{(P_{L}AP_{L})}$ and $b\in\mathcal{R}{({(P_{L}AP_{L})}^{k})}$.
The minimal P-norm solution of $(\ref{pax})$ is presented by
\begin{equation}\label{adb}
x_{opt}=A_{(L)}^{(D)}b
\end{equation}
where P is nonsingular matrix such that $P^{-1}{(P_{L}AP_{L})}P$ is the Jordan canonical form of $P_{L}AP_{L}$ and
$\left\|{x}\right\|_P=\left\|{P^{-1}x}\right\|_2$.
\end{theorem}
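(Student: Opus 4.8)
The plan is to start from the explicit parametrization of all solutions of \eqref{pax} recorded in \eqref{xb} and then single out the one of least $P$-norm by an orthogonality (Pythagorean) argument performed in Jordan coordinates. Writing $B=P_{L}AP_{L}$, Theorem \ref{T2} $(h)$ gives $A_{(L)}^{(D)}=B^{D}$, while Theorem \ref{T2} $(e)$ gives $A_{(L)}^{(D)}AP_{L}=P_{S,T}=BB^{D}$. Hence the homogeneous factor in \eqref{xb} simplifies, $I_{n}-A_{(L)}^{(D)}AP_{L}=I_{n}-BB^{D}=B^{\pi}$, and every solution can be written as $x=x_{0}+h$ with $x_{0}=A_{(L)}^{(D)}b$ and $h=B^{k-1}B^{\pi}z$, $z\in\mathbb{C}^{n}$ arbitrary. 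It therefore suffices to show that $\|x_{0}+h\|_{P}$ is minimized exactly when $h=0$.

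First I would locate $x_{0}$ and $h$ inside the core--nilpotent splitting $\mathbb{C}^{n}=S\oplus T$ of $B$, where $S=\mathcal{R}(B^{k})$ and $T=\mathcal{N}(B^{k})$. Since $\mathcal{R}(B^{D})=\mathcal{R}(B^{k})=S$, we get $x_{0}=B^{D}b\in S$; since $B^{\pi}$ is the projector onto $\mathcal{N}(BB^{D})=\mathcal{N}(B^{k})=T$ and $T$ is $B$-invariant, we get $h=B^{k-1}B^{\pi}z\in T$. Thus the entire homogeneous freedom in \eqref{xb} lives in $T$, whereas the particular solution lives in $S$.

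The decisive step is to recast this algebraic splitting as an orthogonal one for $\|\cdot\|_{P}$. By hypothesis $P^{-1}BP=\begin{bmatrix}C & O\\ O & N\end{bmatrix}$ is the Jordan form of $B$, with $C\in\mathbb{C}^{r\times r}$ invertible and $N$ nilpotent ($N^{k}=O$), where $r=\textup{rank}(B^{k})$. In the coordinates $\tilde{x}=P^{-1}x$ the subspace $P^{-1}S$ is the span of the first $r$ standard basis vectors and $P^{-1}T$ the span of the last $n-r$, so $P^{-1}x_{0}$ and $P^{-1}h$ are supported on complementary coordinate blocks. Consequently, with $\langle u,v\rangle_{P}:=(P^{-1}v)^{\ast}(P^{-1}u)$, one has $\langle x_{0},h\rangle_{P}=0$, and the Pythagorean identity yields $\|x_{0}+h\|_{P}^{2}=\|P^{-1}x_{0}\|_{2}^{2}+\|P^{-1}h\|_{2}^{2}=\|x_{0}\|_{P}^{2}+\|h\|_{P}^{2}\geq\|x_{0}\|_{P}^{2}$, with equality if and only if $h=0$. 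Since $h=0$ is attained (e.g.\ at $z=0$), the unique minimal $P$-norm solution is $x_{opt}=x_{0}=A_{(L)}^{(D)}b$, which is \eqref{adb}.

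I expect the main obstacle to be conceptual rather than computational: one must argue cleanly that the particular choice of $P$ diagonalizing $B$ into Jordan form is exactly what makes the core subspace $S$ and the nilpotent subspace $T$ orthogonal with respect to $\|\cdot\|_{P}$, so that the splitting $x=x_{0}+h$ is $P$-orthogonal. Once this is established, verifying $x_{0}\in S$ and $h\in T$ is routine from the Drazin identities $\mathcal{R}(B^{D})=S$, $\mathcal{R}(B^{\pi})=T$ together with the $B$-invariance of $T$, and the minimization follows immediately from the Pythagorean identity.
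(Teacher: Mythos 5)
Your proposal is correct and follows essentially the same route as the paper: both start from the general solution \eqref{xb}, pass to Jordan coordinates via $P$, and obtain the minimality of $A_{(L)}^{(D)}b$ from a Pythagorean identity. The only difference is presentational — the paper expands $\|x\|_P^2$ term by term and lets the block structure kill the cross terms implicitly, whereas you make explicit the underlying reason (the particular solution lies in $S=\mathcal{R}((P_LAP_L)^k)$, the homogeneous part in $T=\mathcal{N}((P_LAP_L)^k)$, and $P^{-1}$ maps these to complementary coordinate blocks), which is a cleaner justification of the same step.
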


\begin{proof}
  Since
\begin{align*}
  \left\|{x}\right\|_{P}^{2}=&\left\|{P^{-1}\left[A_{(L)}^{(D)}b+{{(P_{L}AP_{L})}^{k-1}}{(I_{n}-A_{(L)}^{(D)}AP_{L})}z\right]}\right\|_{2}^{2}\\
=&{\left[P^{-1} A_{(L)}^{(D)} P P^{-1} b+P^{-1} {(P_{L}AP_{L})}^{k-1}\left(I-A_{(L)}^{(D)} AP_{L}\right) P P^{-1} z\right]^{\mathrm{T}}\left[P^{-1} A_{(L)}^{(D)} P P^{-1} b\right.} \\
& +\left.P^{-1} {(P_{L}AP_{L})}^{k-1}\left(I-A_{(L)}^{(D)} AP_{L}\right) P P^{-1} z\right]\\
= & \left(P^{-1} A_{(L)}^{(D)} P P^{-1} b\right)^{\mathrm{T}}\left(P^{-1}A_{(L)}^{(D)}  P P^{-1} b\right)+\left[P^{-1} {(P_{L}AP_{L})}^{k-1}\left(I-A_{(L)}^{(D)} AP_{L}\right) P P^{-1} z\right]^{\mathrm{T}}\\
&\times P^{-1} {(P_{L}AP_{L})}^{k-1}\left(I-A_{(L)}^{(D)} AP_{L}\right) P P^{-1} z+\left(P^{-1} b\right)^{\mathrm{T}}\\
&\times{\left\{\left(P^{-1} A_{(L)}^{(D)} P\right)^{\mathrm{T}}\left[P^{-1} {(P_{L}AP_{L})}^{k-1}\left(I- A_{(L)}^{(D)}AP_{L}\right) P\right]\right\}} 
P^{-1} z+\left(P^{-1} z\right)^{\mathrm{T}}\\
&\times\left\{\left[P^{-1} {(P_{L}AP_{L})}^{k-1}\left(I-A_{(L)}^{(D)}AP_{L}\right) P\right]^{\mathrm{T}}\left(P^{-1} A_{(L)}^{(D)} P\right)\right\} P^{-1} b\\
\geq&\left\|P^{-1} A_{(L)}^{(D)} P P^{-1} b\right\|_{2}^{2}+\left\|P^{-1} {(P_{L}AP_{L})}^{k-1}\left(I-A_{(L)}^{(D)}AP_{L}\right) P P^{-1} z\right\|_{2}^{2}\\
=&\left\|A_{(L)}^{(D)} b\right\|_{P}^{2}+\left\|{(P_{L}AP_{L})}^{k-1}\left(I-A_{(L)}^{(D)}AP_{L}\right) z\right\|_{P}^{2} \geqslant\left\|A_{(L)}^{(D)} b\right\|_{P}^{2}.
\end{align*}
The proof is complete.
\end{proof}

\section{Conclusion}
In this work. Our main goal is to present a new generalized inverse, that is, BDD-inverse,
which extends the notion of the Bott-Duffin inverse.
Note that the inverse exists for any square matrix.
We show some of its properties, characterizations and representations.
Furthermore, the BDD-inverse can be used in solving appropriate systems of constrained matrix equations.
The minimum P-norm solution of the corresponding constraint equation is given by applying the BDD-inverse.
Finally, we discussed Cramer's rule for the minimum P-norm solution of constraint equation.
However, based on current research, there are many topics that can be discussed in the BDD-inverse. Some ideas for further research are given below:
\vspace{0.2cm}
\par${(a)}$ The relationship between BDD-inverse and other generalized inverses.
\par${(b)}$ Continuity and perturbation of the BDG-inverse, and the use of iterative methods to calculate the BDG-inverse.
\section*{Acknowledgments}
\noindent The authors would like to
thank the Referees for their valuable comments and suggestions which helped
us to considerably improve the presentation of the paper.
\section*{Disclosure statement}
\noindent No potential conflict of interest was reported by the authors.

\section*{Funding}
\noindent This work is supported by the National Natural Science Foundation of China [grant number
11961076].

\bibliographystyle{tfnlm}
\bibliography{interactnlmsample}
\end{document}